\documentclass[12pt,leqno]{amsart}\usepackage[latin9]{inputenc}\usepackage{verbatim,amsthm,amstext,amssymb,color,amscd,bm}\makeatletter\usepackage{hyperref}\hypersetup{colorlinks=true,citecolor=black,linkcolor=black}\usepackage[shortlabels]{enumitem}\numberwithin{equation}{section}\theoremstyle{plain}\newtheorem{theorem}{Theorem}[section]\newtheorem{lemma}[theorem]{Lemma}\theoremstyle{definition}\makeatother
\begin{document}

\title[Topological entropy, mean dimension, and weakly equivalent flows]{Topological entropy, mean dimension,\\and weakly equivalent flows}
\author[Lei Jin]{Lei Jin}
\address{Lei Jin: School of Mathematics, Sun Yat-sen University, Guangzhou, China}
\email{jinleim@mail.ustc.edu.cn}
\author[Yixiao Qiao]{Yixiao Qiao}
\address{Yixiao Qiao (Corresponding author): School of Mathematics and Statistics, HNP-LAMA, Central South University, Changsha, China}
\email{yxqiao@mail.ustc.edu.cn}
\keywords{Topological entropy; Mean dimension; Weak equivalence; Flow.}
\thanks{Y. Qiao was supported by NNSF of China (Grant No. 12371190).}
\begin{abstract}
In this paper, we mainly revisit a nice theory for topological entropy of weakly equivalent flows, which was originally investigated by Ohno in 1980. We will develop a new approach, being more straightforward and elementary than the measure-theoretic one provided by Ohno, to the theory for weak equivalence of flows, and as a novelty, we study both topological entropy and mean dimension with a highly unified process in relation to such objects.

In particular, for weakly equivalent flows without fixed points we recover Ohno's theorem for topological entropy relation with a substantially different method, and moreover, carry out an analogue within the framework of mean dimension; while for weakly equivalent flows with fixed points, our technique refines the procedure suggested in Ohno's construction, and strengthens Ohno's example with a view towards topological complexity of dynamical systems.

Essentially, our method is topological. For this purpose, we first introduce a modification to the definition of topological entropy, by relating a spanning set inside the state space to a finite set outside, which comes to be a tiny but key difference, via a map, and further, show that it leads eventually to the same value as the topological entropy. Although to an intermediate extent, the modified quantity generally does not coincide with the one appearing in the classical definition, it has some basic combinatorial feature, which not only applies flexibly to topological entropy, but also adapts directly to the mean dimension context. Using this alternative, we are allowed to re-establish and enrich Ohno's theory.
\end{abstract}
\maketitle

\medskip

\section*{Contents}
\begin{itemize}
\item[1.]
Introduction
\begin{itemize}
\item[1.1.]
Background and motivation
\item[1.2.]
Overview of content and structure
\end{itemize}
\item[2.]
An elementary preparation for time-reparameterization
\item[3.]
An alternative approach to topological entropy
\begin{itemize}
\item[3.1.]
Bowen's definition
\item[3.2.]
A modified approach to topological entropy
\item[3.3.]
How to use this definition?
\item[3.4.]
Definition of topological entropy for flows
\item[3.5.]
Review of mean dimension
\end{itemize}
\item[4.]
Weakly equivalent flows without fixed points: mean dimension relation
\item[5.]
Weakly equivalent flows without fixed points: topological entropy relation
\begin{itemize}
\item[5.1.]
An elementary counting lemma
\item[5.2.]
Estimate for $m$
\item[5.3.]
Estimate for $M$
\end{itemize}
\item[6.]
Weakly equivalent flows with fixed points: a stronger example
\end{itemize}

\medskip

\bigskip

\section{Introduction}
\subsection{Background and motivation}
In this paper, we mainly revisit a nice theory for topological entropy of weakly equivalent flows, which was originally explored by Ohno \cite{O80}.

A \textbf{flow} $\Phi$ defined on a compact metrizable space $X$ is a continuous map $\Phi:\mathbb{R}\times X\to X$, $(t,x)\mapsto\phi_t(x)$, satisfying that $\phi_0(x)=x$ and $\phi_{t+s}(x)=\phi_t(\phi_s(x))$, for all $t,s\in\mathbb{R}$ and $x\in X$. Sometimes we write $(\phi_t)_{t\in\mathbb{R}}$ for the flow instead of $\Phi$, which is also referred to as a one-parameter group of homeomorphisms of $X$, being continuous in $(t,x)$. Such a flow is usually denoted by a pair $(X,\Phi)$ or $(X,(\phi_t)_{t\in\mathbb{R}})$. Two flows $(X,\Phi)$ and $(Y,\Psi)$ are \textbf{weakly equivalent} if there is a homeomorphism from $X$ to $Y$ mapping orbits of $\Phi$ onto orbits of $\Psi$ (while preserving the time-orientation for each orbit). Clearly, the notion of weak equivalence can be understood as a relation weaker than topological conjugacy (i.e. being dynamically isomorphic).

In 1980, Ohno \cite{O80} systematically investigated the connection between two weakly equivalent flows with a view towards topological entropy. Using measure-theoretic tools with the help of the variational principle and ergodic theorems, Ohno \cite{O80} showed that the three properties of topological entropy equal to zero, infinite, and being positive and finite, respectively, are all preserved by weak equivalence for flows without fixed points, while to the contrary, neither of them is the case for flows with fixed points (by constructing an example of a pair of weakly equivalent flows of topological entropy zero and positive, respectively).

A remaining problem being quite unclear at that moment was posed naturally by Ohno \cite{O80}, which roughly asked if one can take a better structure on the flows into account so as to preserve zero topological entropy as an invariant under such a weak equivalence. In 2009, Sun--Young--Zhou \cite{SYZ09} found a solution to this problem. More precisely, they constructed two weakly equivalent smooth flows, one of which has positive topological entropy and the other has zero topological entropy. This nice construction clarifies a fact that such phenomena still exist firm even within the class of differentiable flows, i.e., being not affected by some very good structure, in some sense, equipped on the flow.

The above-mentioned direction is to push ahead with nice structures equipped on a pair of weakly equivalent flows. In spite of that, it focused mainly on the differentiability of the dynamical mapping along with the smoothness of its state space (e.g., a compact Riemannian manifold), but made a minor contribution to the topological aspect. More specifically, it does not develop any topological complexity theory for weak equivalence (i.e., limiting one's attention only to topological entropy of flows). In this paper, we aim to explore weakly equivalent flows along another direction of dynamics: the topological complexity. In fact, the topological complexity of two weakly equivalent flows may be extremely far away from each other, and in particular, the possible difference between two flows with fixed points in the complexity extent to which weak equivalence may preserve, turns out to become much more involved than what ``being of topological entropy zero or infinity'' can describe, which in consequence has to be demonstrated beyond the framework of entropy theory.

Now a reasonable direction is to examine some aspect of topological complexity other than topological entropy, for weakly equivalent flows. Among topological invariants of dynamical systems, mean dimension, which originates with Gromov--Lindenstrauss--Weiss around 2000, has proved to achieve significant success, as remarkable as topological entropy. For example, the topological entropy of the shift action over the alphabet $[0,1]$ (being also referred to as the full shift on the product space $[0,1]^\mathbb{Z}$) is equal to infinity, the same as that over a countably infinite alphabet. However, these two dynamical systems seem to be essentially distinct, as the former is obviously much more complicated than the latter. From this point of view, topological entropy is somewhat insufficient to reflect the complexity difference between them. It turns out that mean dimension provides a proper description greatly. In relation to the above-mentioned dynamical systems, for instance, the mean dimension of the former is $1$, while the latter $0$ (for details the reader is referred to \cite{LW00}). The example is very concrete and simple, and in general, it was shown \cite{LW00} that if a dynamical system has finite topological entropy, then its mean dimension must be zero, but not vice versa. Seen from this aspect, mean dimension substantially benefits a desired characterization for dynamical systems having infinite topological entropy. Moreover, those invariants for characterizing topological complexities can be also regarded as a kind of dynamical analogues of cardinality and dimension.

As pointed out previously, the topological complexity of weakly equivalent flows with fixed points may differ strikingly, and indeed, we show that there exists a pair of weakly equivalent flows such that one of them has infinite mean dimension while the other has zero topological entropy (for details see Section 6). However, for a pair of weakly equivalent flows without fixed points, when they are at the level of having topological entropy infinity, we should naturally turn to considering their mean dimension, more precisely, we prove that in this case mean dimension is still well behaved as topological entropy under the weak equivalence (see Section 4).

In addition, there are at least two more reasons behind our motivation. The first is that all the statements of Ohno's results, as mentioned above, are purely topological, but Ohno's arguments actually adopted a measure-theoretic method, in particular, both the variational principle and the Birkhoff ergodic theorem were used essentially among those proofs, hence it would be satisfactory if one could (re)produce direct proofs (being substantially different from Ohno's proofs, and moreover, relying on neither the variational principle nor the ergodic theorems) of these results, which are purely topological, too. The second is that if one tries to find an analogue of Ohno's theory for mean dimension, then one probably has to develop a new approach to this aim, which meanwhile is hoped to adapt easily to the context of mean dimension.

Motivated by all what we have explained, the main purpose of this paper, being further than our initial goal, is to develop a more flexible approach to topological entropy (see Section 3) in company with a convenient application (being further developed in Section 5) to reestablishing and enriching Ohno's theory for weak equivalence of flows (in whole, not only to the existence of that kind of concrete examples). As expected, this method for estimating topological entropy proves to be more straightforward than the measure-theoretic one provided by Ohno in \cite{O80}, and as a novelty, both topological entropy and mean dimension are related to weakly equivalent flows for which the theories are now studied and carried out with a highly unified process. Lastly, it would be worth mentioning that our technique is new even if one's attention is completely restricted to topological entropy rather than mean dimension (i.e., for anyone not interested in mean dimension it still can be read properly with mean dimension being simply removed from the present paper but without loss of novelty), and that this paper is self-contained (in either sense of them).

\subsection{Overview of content and structure}
The remaining part of this paper is organized as follows:

In Section 2, we prepare material on time-reparameterization for weakly equivalent flows without fixed points, which will be used in Section 4 and Section 5. Throughout this paper, neither a measure nor any measure-theoretic tool is needed, and as a result, we shall limit our attention only to the topological aspect and restrict our preparation to a rather elementary level.

In Section 3, before we proceed any further, we first reformulate the definition of topological entropy (for homeomorphisms) by following the standard procedure explored by Bowen, and then we turn to introducing a (slightly) different approach to topological entropy by making modifications to Bowen's classical definition. Next, we prove that the modified definition for topological entropy leads eventually to the same value as the one defined by the original mechanism, and furthermore, we show the reader how to use this definition (in particular, we reproduce proofs of two well-known propositions). Along this way we proceed to the definition of topological entropy for flows (the value given by such an alternative definition, as expected, certainly agrees with the classically-defined topological entropy, too), and moreover, we explain some extra advantage of our method in defining topological entropy for flows (and in finding possible applications). Finally, a subsection for a short review of mean dimension is included also in this section.

In Section 4, we investigate the mean dimension relation between two weakly equivalent flows without fixed points. The first purpose of this section is of course to carry out an analogue of Ohno's theory for mean dimension. However, a more important purpose is to roughly demonstrate how to put Ohno's theory into our framework, which also can be regarded as a conceptual picture of Section 5, so as to generally describe a purely topological proof of the result.

In Section 5, we study the topological entropy relation for weakly equivalent flows without fixed points. Here the case proves to be more complicated than the one in Section 4. Using the modified approach to the definition of topological entropy for flows introduced in Section 3, we develop an elementary and direct method, and in consequence we are then allowed to re-establish Ohno's theory for topological entropy with it, where the reader may find exactly how this tiny but key difference gives rise to an extra convenience and a more flexible handling of estimates for topological entropy in such a situation.

In Section 6, topological entropy and mean dimension come together, being involved in a stronger example constructed for the purpose of clarifying that in relation to weakly equivalent flows with fixed points, the dynamical complexity difference between them is possible to be extremely far from each other. The proof given (for those desired properties of the construction) in the section is also purely topological and straightforward. Notice that this section is logically independent of Section 2, Section 4 and Section 5.

\section{An elementary preparation for time-reparameterization}
Let $(X,(\phi_t)_{t\in\mathbb{R}})$ and $(Y,(\psi_s)_{s\in\mathbb{R}})$ be two weakly equivalent flows without fixed points via a homeomorphism $\pi:X\to Y$. In this section we shall explain how to find a time change (i.e. time-reparameterization) induced naturally by $\pi:X\to Y$. Specifically, for any $t\in\mathbb{R}$ and $x\in X$ there exists some $\theta(t,x)\in\mathbb{R}$ such that $\psi_{\theta(t,x)}(\pi(x))=\pi(\phi_t(x))$. More precisely, $\theta(t,x)$ is uniquely determined when the point $x\in X$ is free (i.e., $x$ is not a periodic point) with respect to $(\phi_t)_{t\in\mathbb{R}}$. For a periodic point $x\in X$ (but not a fixed point) under $(\phi_t)_{t\in\mathbb{R}}$ there is a fundamental period $r(x)>0$ for $x$, namely the smallest positive number among all the real numbers $r$ with the property that $\phi_r(x)=x$. The same is still valid for $y=\pi(x)\in Y$ under $(\psi_s)_{s\in\mathbb{R}}$, whose fundamental period is denoted by $r(y)>0$. Now we correspond $\theta(n\cdot r(x),x)$ to $n\cdot r(y)$ for every $n\in\mathbb{Z}$, and next, on each interval $n\cdot r(x)\le t<(n+1)\cdot r(x)$ we apply the same argument to $\theta(t,x)$ as derived in the free case.

Note that, however, if a flow $(X,(\phi_t)_{t\in\mathbb{R}})$ contains a fixed point $x\in X$, then the set $\{r\in\mathbb{R}:\phi_r(x)=x\}$ does not form a discrete subgroup of $\mathbb{R}$ (which is equal to $\mathbb{R}$ itself), namely, such a minimum positive period of $x$ does not exist. This comes to be the reason why we assumed the absence of the fixed points of the flows.

Clearly, this defines a map $\theta:\mathbb{R}\times X\to\mathbb{R}$ satisfying some standard properties listed below:
\begin{itemize}
\item[(i)]
$\psi_{\theta(t,x)}(\pi(x))=\pi(\phi_t(x))$, for every $t\in\mathbb{R}$ and $x\in X$.
\item[(ii)]
$\theta(0,x)=0$ and $\theta(t,x)$ is strictly increasing in $t\in\mathbb{R}$, for any $x\in X$.
\item[(iii)]
$\theta(t^\prime+t,x)=\theta(t^\prime,\phi_t(x))+\theta(t,x)$, for all $t,t^\prime\in\mathbb{R}$ and $x\in X$.
\end{itemize}
As follows we show that this map satisfies an additional condition:
\begin{itemize}
\item[(iv)]
$\theta$ is continuous in $(t,x)\in\mathbb{R}\times X$.
\end{itemize}
Note that all these properties are being used later.

In order to prove that $\theta:\mathbb{R}\times X\to\mathbb{R}$ is continuous. Fix two sequences $\{t_n\}_{n=1}^\infty$ converging to $t$ in $\mathbb{R}$ and $\{x_n\}_{n=1}^\infty$ converging to $x$ in $X$, respectively. By properties (ii) and (iii) as stated above, one may assume, without loss of generality, that $t=0$, and it suffices to show that $\theta(t_n,x_n)$ tends to $0$ as $n$ goes to $\infty$.

For each $n\in\mathbb{N}$ take any point $x_n^\prime$ between $0$ and $t_n$ from the orbit of $x_n$ in $X$ under $(\phi_t)_{t\in\mathbb{R}}$, i.e. being of the form $x_n^\prime=\phi_{t_n^\prime}(x_n)$ for $t_n^\prime$ ranging over $[0,t_n]$ or $[t_n,0]$. Since $\phi_t(x)$ is continuous in $(t,x)\in\mathbb{R}\times X$ and since $\phi_0(x)=x$, one gets that $x_n^\prime\to x$ in $X$ as $n\to\infty$.

Note that $\pi:X\to Y$ is a homeomorphism. Put $y=\pi(x)$ and $y_n=\pi(x_n)$ for every $n\in\mathbb{N}$. Since $x_n\to x$ in $X$ as $n\to\infty$, $y_n\to y$ in $Y$ as $n\to\infty$. Note that the homeomorphism $\pi:X\to Y$ preserves orbits induced by $(\phi_t)_{t\in\mathbb{R}}$ and $(\psi_s)_{s\in\mathbb{R}}$ along time-orientation. For each $n\in\mathbb{N}$ put $s_n=\theta(t_n,x_n)$. For each $n\in\mathbb{N}$ take any point $y_n^\prime$ between $0$ and $s_n$ from the orbit of $y_n$ in $Y$ under $(\psi_s)_{s\in\mathbb{R}}$, i.e. being of the form $y_n^\prime=\psi_{s_n^\prime}(y_n)$ for $s_n^\prime$ ranging over $[0,s_n]$ or $[s_n,0]$. By properties (i) and (ii) one deduces that $y_n^\prime\to y$ in $Y$ as $n\to\infty$.

Now argue by contradiction. Suppose that $s_n$ does not converge to $0$ as $n\to\infty$. Then there is some $\epsilon_0>0$ with a subsequence of $\{s_n\}_{n=1}^\infty$, which is contained in $\mathbb{R}\setminus[-\epsilon_0,\epsilon_0]$, being still assumed to be $\{s_n\}_{n=1}^\infty$, and further, without loss of generality one may assume that $s_n\ge\epsilon_0$ for all $n\in\mathbb{N}$. It follows that for any $0<\epsilon<\epsilon_0$, $\psi_\epsilon(y_n)\to y$ in $Y$ as $n\to\infty$, and hence, one has $\psi_\epsilon(y)=y$. This implies that $y\in Y$ is a $(\psi_s)_{s\in\mathbb{R}}$-fixed point, a contradiction. Thus, we conclude.

We fix the notation as mentioned above. Now put $m=\inf_{x\in X}\theta(1,x)$ and $M=\sup_{x\in X}\theta(1,x)$. Since $X$ is compact, we see that $0<m\le M<+\infty$, and actually, $m=\min_{x\in X}\theta(1,x)$ and $M=\max_{x\in X}\theta(1,x)$. Next we prove an elementary lemma which is also being used in our main proof. Unlike Ohno's traditional measure-theoretical method, note that here the upper and lower bounds for $\theta(n,x)/n$ are valid for all the points $x\in X$ and every $n\in\mathbb{N}$.

\begin{lemma}\label{mM}
$$m\le\frac{\theta(n,x)}{n}\le M,\quad\quad\forall\;n\in\mathbb{N},\quad\forall\;x\in X.$$
\end{lemma}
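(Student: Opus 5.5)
We plan to use the cocycle identity (iii) to write $\theta(n,x)$ as a telescoping sum of values of $\theta(1,\cdot)$ along the orbit. Then we bound each summand by the extremal values $m$ and $M$.

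First we establish, by induction on $n\in\mathbb{N}$, the decomposition
$$\theta(n,x)=\sum_{k=0}^{n-1}\theta(1,\phi_k(x)),\quad\quad\forall\;x\in X.$$
For $n=1$ this is trivial. For the inductive step we apply property (iii) with $t^\prime=1$ and $t=n$. This gives
$$\theta(n+1,x)=\theta(1,\phi_n(x))+\theta(n,x),$$
and substituting the inductive hypothesis for $\theta(n,x)$ yields the formula for $n+1$. Only (iii) is used here, so the argument applies equally to free points and to periodic points; in the periodic case $\theta$ was defined so that (iii) still holds.

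Next, each point $\phi_k(x)$ lies in $X$, so by the definitions $m=\min_{x\in X}\theta(1,x)$ and $M=\max_{x\in X}\theta(1,x)$ we have
$$m\le\theta(1,\phi_k(x))\le M,\quad\quad k=0,1,\dots,n-1.$$
These extrema are attained and finite because $\theta(1,\cdot)$ is continuous on the compact space $X$ by (iv). Summing the $n$ inequalities gives $nm\le\theta(n,x)\le nM$, and dividing by $n$ proves the lemma.

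The only step needing any care is confirming that (iii) may be applied in the form above with integer times. It is an identity valid for all real $t,t^\prime$, so this is immediate. I therefore expect no genuine obstacle; the lemma is a direct consequence of the cocycle property together with the compactness already used to define $m$ and $M$.
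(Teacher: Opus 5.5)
Your proof is correct and essentially matches the paper's: the paper applies (iii) with $t^\prime=1$ to get $m\le\theta(n,x)-\theta(n-1,x)\le M$ and then telescopes from $\theta(0,x)=0$. That is the same as your explicit sum $\theta(n,x)=\sum_{k=0}^{n-1}\theta(1,\phi_k(x))$, just written one increment at a time.
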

\begin{proof}
By property (iii) $$\theta(n,x)=\theta(1,\phi_{n-1}(x))+\theta(n-1,x),\quad\quad\forall n\in\mathbb{Z},\quad\forall x\in X,$$ which implies that $$m\le\theta(n,x)-\theta(n-1,x)\le M,\quad\quad\forall n\in\mathbb{Z},\quad\forall x\in X.$$ Note that for any $x\in X$, $\theta(0,x)=0$. The desired statement follows.
\end{proof}

\section{An alternative approach to topological entropy}
In this section, we first review Bowen's definition of topological entropy for homeomorphisms, and next, introduce a modification to it along with an elementary demonstration of a very simple application. In such a modified approach to topological entropy for homeomorphisms, we are then allowed to define topological entropy for flows with a highly unified procedure. Lastly, we include the definition of mean dimension for homeomorphisms and flows.

In relation to Bowen's famous definition, we borrow the notation and description for the definition of topological entropy generally from a good book by Walters \cite{W82}. However, in order to formulate the notions in a unified way being more proper to our context, we need adopt a slightly different terminology. The reason behind this is that on the one hand, we are dealing with both discrete and continuous time parameters (i.e., both $\mathbb{Z}$-actions and $\mathbb{R}$-actions), and on the other hand, the treatment is expected, for convenience, to be parallel to our modification to topological entropy along with the framework of mean dimension.

The symbol $\mathbb{N}$ is to denote the set of positive integers. The symbol $\#A$ is to denote the cardinality of a set $A$.

\subsection{Bowen's definition}
Let $T:X\to X$ be a homeomorphism defined on a compact metrizable space $X$. Take a compatible metric $d$ on $X$.

For every $\epsilon>0$ a subset $A$ of $X$ is called an \textbf{$\epsilon$-spanning set for $X$ with respect to the distance $d$} if for any $x\in X$ there is some $a\in A$ such that $d(a,x)<\epsilon$. We define $\mathrm{span}_\epsilon(X,d)$ to be the smallest cardinality $\#A$ of any $\epsilon$-spanning set $A\subset X$ for $X$ with respect to $d$. Note that this quantity does not involve any dynamics. By the compactness of $X$, $\mathrm{span}_\epsilon(X,d)$ is obviously a finite and positive integer.

For a finite subset $F$ of $\mathbb{Z}$ the \textbf{Bowen metric generated by $F$ under $T$}, denoted by $d^T_F$ on $X$, is defined as $$d^T_F(x,x^\prime)=\max_{i\in F}d(T^ix,T^ix^\prime),\quad\quad x,x^\prime\in X.$$ Since $F$ is finite, the above expression for $d^T_F$ indeed attains its maximum, and moreover, the distance $d^T_F$ still gives the topology of $X$. Here we do not refer any set $A\subset X$ to an $(n,\epsilon)$-spanning subset of $X$ (for $n\in\mathbb{N}$) so as to avoid some possible confusion.

The \textbf{topological entropy} of $(X,T)$ is defined by $$\mathrm{h}_\mathsf{top}(X,T)=\lim_{\epsilon\to0}\limsup_{\mathbb{N}\ni n\to\infty}\frac{\log\mathrm{span}_\epsilon(X,d^T_{[0,n-1]\cap\mathbb{Z}})}{n}.$$ It is well known that the defined value $\mathrm{h}_\mathsf{top}(X,T)$ does not depend upon the choice of metrics $d$ compatible with the topology equipped on $X$.

\subsection{A modified approach to topological entropy}
Now we make a slight modification to $\mathrm{span}_\epsilon(X,d)$. This will be used essentially in Section 5. As usual, let $T:X\to X$ be a homeomorphism defined on a compact metrizable space $X$, and take a compatible metric $d$ on $X$.

For every $\epsilon>0$ we denote by $\mathrm{part}_\epsilon(X,d)$ the smallest cardinality of any set $B$ such that there exists a map $f:X\to B$ satisfying that $f(x)=f(x^\prime)$ implies $d(x,x^\prime)\le\epsilon$, for any $x,x^\prime\in X$. It is worth mentioning that $f:X\to B$ is \textit{only} a map between two \textit{sets}, which is \textit{not} equipped with any structure (neither topological nor dynamical). Since $X$ is compact, $\mathrm{part}_\epsilon(X,d)$ is a finite positive integer. Note that at the moment this quantity does not involve dynamics. Moreover, instead of an $\epsilon$-spanning set, we prefer to call such a map $f:X\to B$ (along with the set $B$) an \textbf{$\epsilon$-partition for $X$ with respect to the distance $d$}.

Unlike spanning sets, an interesting point here is that, in some sense, the representatives are moving outside via a map. The lemmas below will show that by replacing $\mathrm{span}_\epsilon(X,d^T_{[0,n-1]\cap\mathbb{Z}})$ with $\mathrm{part}_\epsilon(X,d^T_{[0,n-1]\cap\mathbb{Z}})$ the re-defined value coincides with the originally-defined one, and further, as an additional bonus, the intermediate limit (in $n\in\mathbb{N}$) still exists. As we will see in the sequel, the map (as an auxiliary) along with the existence of the intermediate limit proves to be a tiny but key difference that gives rise to a more flexible and convenient handling of estimates for topological entropy in some situation.

\begin{lemma}\label{sts}
For any compact metric space $(X,d)$ and any $\epsilon>0$$$\mathrm{span}_\epsilon(X,d)\le\mathrm{part}_\epsilon(X,d)\le\mathrm{span}_{\epsilon/2}(X,d).$$
\end{lemma}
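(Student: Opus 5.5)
The plan is to prove the two inequalities separately. Each one converts a witness for one quantity directly into a witness for the other, with no dynamics involved.

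For the right inequality $\mathrm{part}_\epsilon(X,d)\le\mathrm{span}_{\epsilon/2}(X,d)$:
\begin{itemize}
\item Take an $\epsilon/2$-spanning set $A\subset X$ with $\#A=\mathrm{span}_{\epsilon/2}(X,d)$, and put $B=A$.
\item For each $x\in X$, choose (by the definition of spanning set and the axiom of choice, or any selection rule) a point $f(x)\in A$ with $d(f(x),x)<\epsilon/2$.
\item If $f(x)=f(x^\prime)=a$, the triangle inequality gives $d(x,x^\prime)\le d(x,a)+d(a,x^\prime)<\epsilon$.
\end{itemize}
So $f:X\to A$ is an $\epsilon$-partition, and $\mathrm{part}_\epsilon(X,d)\le\#A$. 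This is exactly why the definition of $\mathrm{part}$ only needs $f$ to be a map of sets: an arbitrary choice of nearby representative is allowed.

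For the left inequality, start from an $\epsilon$-partition $f:X\to B$ with $\#B=\mathrm{part}_\epsilon(X,d)$. Discard the points of $B$ not in $f(X)$. In each nonempty fiber $f^{-1}(b)$, pick a point $a_b$ and let $A=\{a_b\}$, so $\#A\le\#B$. Every $x\in X$ lies in the fiber of $b=f(x)$, hence $d(a_b,x)\le\epsilon$. This makes $A$ an "$\epsilon$-spanning set with non-strict inequality".

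This boundary issue is where I expect the only real obstacle. The definition of spanning set uses the strict inequality $d(a,x)<\epsilon$, whereas fibers of a partition only satisfy $\le\epsilon$. In fact, the literal statement can fail at this endpoint. Take the two-point space $\{0,1\}$ with $d(0,1)=1$ and $\epsilon=1$: then $\mathrm{part}_1=1$ while $\mathrm{span}_1=2$.

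I would repair this in one of two ways:
\begin{itemize}
\item prove the left inequality in the form $\mathrm{span}_{\epsilon^\prime}(X,d)\le\mathrm{part}_\epsilon(X,d)$ for every $\epsilon^\prime>\epsilon$, which the argument above gives immediately; or
\item read the partition condition as strict, $d(x,x^\prime)<\epsilon$, in which case the argument gives $\mathrm{span}_\epsilon\le\mathrm{part}_\epsilon$ exactly and the right inequality is unaffected, since it already produced a strict bound.
\end{itemize}
Either version suffices for the intended use. Both quantities are monotone in $\epsilon$, and in the definition of topological entropy one takes $\lim_{\epsilon\to0}$ after the growth rate in $n$. A sandwich $\mathrm{span}_{2\epsilon}\le\mathrm{part}_\epsilon\le\mathrm{span}_{\epsilon/2}$, applied to the Bowen metrics $d^T_{[0,n-1]\cap\mathbb{Z}}$, therefore yields the same entropy.
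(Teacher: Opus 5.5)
Your argument is the direct witness-conversion that the paper compresses into ``This follows directly from definition'': an $\epsilon/2$-spanning set, with an arbitrary choice of nearby representative, gives an $\epsilon$-partition, and one representative per fiber gives a spanning set. Your endpoint observation is also correct, and the paper's one-line proof overlooks it. Spanning sets require $d(a,x)<\epsilon$, while partitions only give $d(x,x^\prime)\le\epsilon$, so your two-point example ($\mathrm{part}_1=1<2=\mathrm{span}_1$) shows the left inequality fails as literally stated. Your repaired form $\mathrm{span}_{\epsilon^\prime}(X,d)\le\mathrm{part}_\epsilon(X,d)$ for $\epsilon^\prime>\epsilon$ (or the strict reading of the partition condition) is exactly what the argument proves, and it is all that is needed to recover $\mathrm{h}_\mathsf{top}$ after letting $\epsilon\to0$.
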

\begin{proof}
This follows directly from definition.
\end{proof}

Note that the statement of this lemma does not involve dynamics. When involving dynamics one has in particular that for all $\epsilon>0$ and $n\in\mathbb{N}$, $\mathrm{span}_\epsilon(X,d^T_{[0,n]\cap\mathbb{Z}})\le\mathrm{part}_\epsilon(X,d^T_{[0,n]\cap\mathbb{Z}})\le\mathrm{span}_{\epsilon/2}(X,d^T_{[0,n]\cap\mathbb{Z}})$.

\begin{lemma}\label{tnm}
For any $n,m\in\mathbb{N}$ and $\epsilon>0$, $$\mathrm{part}_\epsilon(X,d^T_{[0,n+m-1]\cap\mathbb{Z}})\le\mathrm{part}_\epsilon(X,d^T_{[0,n-1]\cap\mathbb{Z}})\cdot\mathrm{part}_\epsilon(X,d^T_{[0,m-1]\cap\mathbb{Z}}).$$
\end{lemma}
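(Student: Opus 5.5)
The plan is to take an optimal $\epsilon$-partition for each of the two shorter windows and glue them together: one for $[0,n-1]$ directly, and one for $[0,m-1]$ composed with $T^n$. Their product map will then serve as an $\epsilon$-partition for the longer window $[0,n+m-1]$.

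First I fix the two partitions. Let $f_1:X\to B_1$ be an $\epsilon$-partition for $X$ with respect to $d^T_{[0,n-1]\cap\mathbb{Z}}$, with $\#B_1=\mathrm{part}_\epsilon(X,d^T_{[0,n-1]\cap\mathbb{Z}})$. Let $f_2:X\to B_2$ be an $\epsilon$-partition for $X$ with respect to $d^T_{[0,m-1]\cap\mathbb{Z}}$, with $\#B_2=\mathrm{part}_\epsilon(X,d^T_{[0,m-1]\cap\mathbb{Z}})$. Both exist because the defining quantities are finite positive integers and a minimizer is attained.

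Next I define $f:X\to B_1\times B_2$ by
$$f(x)=\bigl(f_1(x),f_2(T^nx)\bigr).$$
Then $\#(B_1\times B_2)=\#B_1\cdot\#B_2$. Since $\mathrm{part}_\epsilon$ is a minimum over all admissible maps, it suffices to check that $f$ is an $\epsilon$-partition for $d^T_{[0,n+m-1]\cap\mathbb{Z}}$.

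To check this, suppose $f(x)=f(x')$.
\begin{itemize}
\item From $f_1(x)=f_1(x')$ we get $d(T^ix,T^ix')\le\epsilon$ for $0\le i\le n-1$.
\item From $f_2(T^nx)=f_2(T^nx')$ we get $d(T^j T^n x,T^jT^nx')\le\epsilon$ for $0\le j\le m-1$, that is, $d(T^ix,T^ix')\le\epsilon$ for $n\le i\le n+m-1$.
\end{itemize}
Taking the maximum over $i\in[0,n+m-1]$ gives $d^T_{[0,n+m-1]\cap\mathbb{Z}}(x,x')\le\epsilon$, which is the required condition.

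There is no real obstacle here; this is exactly where the map formulation pays off, since the product of two labellings is again a labelling. The analogous statement for spanning sets would require choosing representatives inside $X$ and would lose a factor in $\epsilon$. The only care needed is the index bookkeeping, namely that the shift by $T^n$ makes the two windows cover $[0,n+m-1]$ exactly.

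Strictly, the statement as written asks for the window $[0,n+m-1]$, and this is what the argument covers: $n$ indices from the first window and $m$ from the second, giving $n+m$ indices in total. The resulting subadditivity of $\log\mathrm{part}_\epsilon(X,d^T_{[0,n-1]\cap\mathbb{Z}})$ in $n$ is what will later give, via Fekete's lemma, the existence of the intermediate limit mentioned before Lemma \ref{sts}.
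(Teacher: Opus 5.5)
Your proposal is correct and is essentially the paper's own proof: the paper uses the same product map $x\mapsto(f_n(x),f_m(T^nx))$ into $B_n\times B_m$ and checks that it is an $\epsilon$-partition for $d^T_{[0,n+m-1]\cap\mathbb{Z}}$. Your index bookkeeping for the shifted window $[n,n+m-1]$ is exactly what is needed.
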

\begin{proof}
Fix $\epsilon>0$. For every $n\in\mathbb{N}$ take a map $f_n:X\to B_n$ such that $\#B_n=\mathrm{part}_\epsilon(X,d^T_{[0,n-1]\cap\mathbb{Z}})$ and $f_n$ is an $\epsilon$-partition for $X$ with respect to the distance $d^T_{[0,n-1]\cap\mathbb{Z}}$. For $n,m\in\mathbb{N}$ consider the map $g_{n+m}:X\to B_n\times B_m$ defined by $x\mapsto(f_n(x),f_m(T^nx))$. Clearly, if $x,x^\prime\in X$ satisfy $g_{n+m}(x)=g_{n+m}(x^\prime)$, then $d^T_{[0,n+m-1]\cap\mathbb{Z}}(x,x^\prime)\le\epsilon$. Thus, $\mathrm{part}_\epsilon(X,d^T_{[0,n+m-1]\cap\mathbb{Z}})\le\#(B_n\times B_m)$, and therefore the statement follows.
\end{proof}

By Lemma \ref{tnm}, the term $\log\mathrm{part}_\epsilon(X,d^T_{[0,n-1]\cap\mathbb{Z}})$ is subadditive in $n\in\mathbb{N}$: $$\log\mathrm{part}_\epsilon(X,d^T_{[0,n+m-1]\cap\mathbb{Z}})\le\log\mathrm{part}_\epsilon(X,d^T_{[0,n-1]\cap\mathbb{Z}})+\log\mathrm{part}_\epsilon(X,d^T_{[0,m-1]\cap\mathbb{Z}}).$$ Hence, by Fekete's lemma, the limit $\lim_{\mathbb{N}\ni n\to\infty}(\log\mathrm{part}_\epsilon(X,d^T_{[0,n-1]\cap\mathbb{Z}}))/n$ exists. Note that the term $\log\mathrm{part}_\epsilon(X,d^T_{[0,n-1]\cap\mathbb{Z}})$ is monotone in $\epsilon>0$. By Lemma \ref{sts}, the topological entropy of $(X,T)$ can be recovered by $$\mathrm{h}_\mathsf{top}(X,T)=\lim_{\epsilon\to0}\lim_{\mathbb{N}\ni n\to\infty}\frac{\log\mathrm{part}_\epsilon(X,d^T_{[0,n-1]\cap\mathbb{Z}})}{n}.$$ Furthermore, although a distance $d$ on $X$ is involved in the term $\mathrm{part}_\epsilon(X,d^T_{[0,n-1]\cap\mathbb{Z}})$, the eventually-defined value for $\mathrm{h}_\mathsf{top}(X,T)$ does not depend on the choice of metrics $d$ giving the topology of $X$.

\subsection{How to use this definition?}
Next, as a very simple application, we demonstrate how to use this definition to show the following two well-known statements:

\subsubsection*{Topological entropy does not increase when taking factors}
Recall that by a \textbf{factor map} $\pi:(X,T)\to(Y,S)$ we precisely understand a continuous and surjective map $\pi:X\to Y$ satisfying $\pi\circ T=S\circ\pi$, where $T:X\to X$ and $S:Y\to Y$ are homeomorphisms of compact metrizable spaces $X$ and $Y$, respectively.
\begin{itemize}\item
Proposition\textbf{.}
If $\pi:(X,T)\to(Y,S)$ is a factor map, then $\mathrm{h}_\mathsf{top}(Y,S)\le\mathrm{h}_\mathsf{top}(X,T)$.
\end{itemize}
\begin{proof}
Take compatible metrics $d$ on $X$ and $\rho$ on $Y$. Since $\pi:X\to Y$ is continuous, for any $\epsilon>0$ there is some $0<\delta<\epsilon$ such that for any $x,x^\prime\in X$ if $d(x,x^\prime)\le\delta$ then $\rho(\pi(x),\pi(x^\prime))\le\epsilon$. Since $\pi:X\to Y$ is onto, one can take a map $f:Y\to X$ by sending each $y\in Y$ to some $f(y)\in\pi^{-1}(y)$ (assigned arbitrarily).

Here again note that the map $f:Y\to X$ does not have any topological property (e.g., not required to be continuous), nor any dynamical property. Although both $X$ and $Y$ are equipped with topological structures, one has to ignore them when restricting to the map $f$ defined between two \textit{sets} only.

Fix $\epsilon>0$ for the moment. For every $n\in\mathbb{N}$ take a map $g_n:X\to B_n$ such that $\#B_n=\mathrm{part}_\delta(X,d^T_{[0,n-1]\cap\mathbb{Z}})$ and that $g_n$ is a $\delta$-partition for $X$ with respect to the distance $d^T_{[0,n-1]\cap\mathbb{Z}}$ (as stated previously). Consider the map $g_n\circ f:Y\to B_n$. Clearly, $g_n\circ f$ is an $\epsilon$-partition for $Y$ with respect to the distance $\rho^S_{[0,n-1]\cap\mathbb{Z}}$ (because $\pi\circ T=S\circ\pi$, as mentioned before). This gives $\mathrm{part}_\epsilon(Y,\rho^S_{[0,n-1]\cap\mathbb{Z}})\le\#B_n=\mathrm{part}_\delta(X,d^T_{[0,n-1]\cap\mathbb{Z}})$. Since $n\in\mathbb{N}$ is arbitrary, and since $\delta\to0$ as $\epsilon\to0$, the statement follows.
\end{proof}

\subsubsection*{Topological entropy of the shift action over finitely many symbols}
For simplicity we explain how one can easily see that the topological entropy of the full shift over two symbols is equal to $\log2$. Denote by $\sigma:\{0,1\}^\mathbb{Z}\to\{0,1\}^\mathbb{Z}$ the \textbf{shift action} on the product space $\{0,1\}^\mathbb{Z}$, defined by: $(x_n)_{n\in\mathbb{Z}}\mapsto(x_{n+1})_{n\in\mathbb{Z}}$.

For convenience we take a distance $\mathsf{D}$ giving the product topology on $\{0,1\}^\mathbb{Z}$ as follows: $$\mathsf{D}((x_n)_{n\in\mathbb{Z}},(x_n^\prime)_{n\in\mathbb{Z}})=\sum_{n\in\mathbb{Z}}\frac{|x_n-x_n^\prime|}{2^{|n|}}\;,\quad\quad\quad\left((x_n)_{n\in\mathbb{Z}},(x_n^\prime)_{n\in\mathbb{Z}}\in\{0,1\}^\mathbb{Z}\right).$$

In order to see that $\mathrm{h}_\mathsf{top}(\{0,1\}^\mathbb{Z},\sigma)\le\log2$, take an arbitrary $\epsilon>0$ and fix it for the moment. Pick some $L_\epsilon\in\mathbb{N}$ such that if $(x_i)_{i\in\mathbb{Z}}$ and $(x_i^\prime)_{i\in\mathbb{Z}}$ in $\{0,1\}^\mathbb{Z}$ satisfy that $(x_i)_{i=-L_\epsilon}^{L_\epsilon}=(x_i^\prime)_{i=-L_\epsilon}^{L_\epsilon}$ then $\mathsf{D}((x_i)_{i\in\mathbb{Z}},(x_i^\prime)_{i\in\mathbb{Z}})\le\epsilon$. It follows that for any $n\in\mathbb{N}$ the finite set $\{0,1\}^{n+2L_\epsilon}$ along with the canonical projection $\mathrm{proj}\mathsf{|}_{-L_\epsilon}^{n+L_\epsilon-1}:\{0,1\}^\mathbb{Z}\to\{0,1\}^{n+2L_\epsilon}$, sending each $(x_i)_{i\in\mathbb{Z}}\in\{0,1\}^\mathbb{Z}$ to $(x_i)_{i=-L_\epsilon}^{n+L_\epsilon-1}\in\{0,1\}^{n+2L_\epsilon}$, is an $\epsilon$-partition for $\{0,1\}^\mathbb{Z}$ with respect to the distance $\mathsf{D}^\sigma_{\mathbb{Z}\cap[0,n-1]}$, and hence $$\log\mathrm{part}_\epsilon(\{0,1\}^\mathbb{Z},\mathsf{D}^\sigma_{\mathbb{Z}\cap[0,n-1]})\le\log\#\{0,1\}^{n+2L_\epsilon}=(n+2L_\epsilon)\cdot\log2,\quad\quad\forall\;n\in\mathbb{N}.$$ This gives $$\lim_{n\to\infty}\frac{\log\mathrm{part}_\epsilon(\{0,1\}^\mathbb{Z},\mathsf{D}^\sigma_{\mathbb{Z}\cap[0,n-1]})}{n}\le\log2.$$ Since $\epsilon>0$ was taken arbitrarily, one finds that $\mathrm{h}_\mathsf{top}(\{0,1\}^\mathbb{Z},\sigma)\le\log2$.

Conversely, to see that $\mathrm{h}_\mathsf{top}(\{0,1\}^\mathbb{Z},\sigma)\ge\log2$, define a map $f_n:\{0,1\}^n\to\{0,1\}^\mathbb{Z}$, for any $n\in\mathbb{N}$, by sending each $(x_i)_{i=0}^{n-1}\in\{0,1\}^n$ to $(\underline{x}_i)_{i\in\mathbb{Z}}\in\{0,1\}^\mathbb{Z}$, where $\underline{x}_i=x_i$ for every $i\in\mathbb{Z}\cap[0,n-1]$ while $\underline{x}_i=0$ for all $i\in\mathbb{Z}\setminus[0,n-1]$. If one endows the set $\{0,1\}^n$ with a distance $\mathsf{dist}_{l^\infty}$ taking values $1$ for any pair of distinct points in $\{0,1\}^n$, and $0$ otherwise (which in consequence gives the discrete topology on $\{0,1\}^n$), then the map $f_n:\{0,1\}^n\to\{0,1\}^\mathbb{Z}$ is distance-increasing with respect to the compact metric spaces $(\{0,1\}^n,\mathsf{dist}_{l^\infty})$ and $(\{0,1\}^\mathbb{Z},\mathsf{D}^\sigma_{\mathbb{Z}\cap[0,n-1]})$: $$\mathsf{dist}_{l^\infty}((x_i)_{i=0}^{n-1},(x_i^\prime)_{i=0}^{n-1})\le\mathsf{D}^\sigma_{\mathbb{Z}\cap[0,n-1]}(f_n((x_i)_{i=0}^{n-1}),f_n((x_i^\prime)_{i=0}^{n-1})),\;\,\forall(x_i)_{i=0}^{n-1},(x_i^\prime)_{i=0}^{n-1}\in\{0,1\}^n.$$ This implies that for any $\epsilon>0$ and any $n\in\mathbb{N}$$$\log\mathrm{part}_\epsilon(\{0,1\}^n,\mathsf{dist}_{l^\infty})\le\log\mathrm{part}_\epsilon(\{0,1\}^\mathbb{Z},\mathsf{D}^\sigma_{\mathbb{Z}\cap[0,n-1]}).$$ Moreover, it is clear that for any $0<\epsilon<1$ and any $n\in\mathbb{N}$ one has $$\log\mathrm{part}_\epsilon(\{0,1\}^n,\mathsf{dist}_{l^\infty})=n\cdot\log2$$ (note that $\log\mathrm{part}_\epsilon(\{0,1\}^n,\mathsf{dist}_{l^\infty})=0$ for all $\epsilon\ge1$ and $n\in\mathbb{N}$). Thus, we conclude that $\mathrm{h}_\mathsf{top}(\{0,1\}^\mathbb{Z},\sigma)\ge\log2$.

Note that here an analogue of the Lebesgue lemma (for $\epsilon$-width dimension, see the final subsection of this section) reduces immediately to an obvious and basic combinatorial statement (where the distance $\mathsf{d}_{l^\infty}$ on a finite product is defined as the maximum among all the $\mathsf{d}$-differences between two corresponding entries ranging over every coordinate of two points in the finite product):
\begin{itemize}\item
Let $A$ be a finite set of cardinality $\#A$, equipped with a distance $\mathsf{d}$ such that $\mathsf{d}(a,a^\prime)\ge1$ for any pair of distinct points $a,a^\prime\in A$. Then for every $0<\epsilon<1$ and every $n\in\mathbb{N}$ one has $\log\mathrm{part}_\epsilon(A^n,\mathsf{d}_{l^\infty})=n\cdot\log\#A$.
\end{itemize}
This idea, of course, is not essentially new at all, but provides a slightly new angel which comes to be highly parallel to mean dimension of full shifts. Also note that under the discrete topology any finite set becomes a compact metrizable space. Furthermore, from such a point of view one can eventually reformulate and unify some concrete expression for topological entropy of subshifts (over finitely many symbols), and is further allowed to see again that there is a minimal subshift of topological entropy equal to an arbitrarily assigned value within $[0,+\infty]$.

\subsection{Definition of topological entropy for flows}
Let $(\phi_t)_{t\in\mathbb{R}}$ be a flow defined on a compact metrizable space $X$. Take a (continuous) metric $d$ on $X$ compatible with its topology. For a compact subset $F\subset\mathbb{R}$ we still consider the Bowen metric on $X$ generated by $F$ under $\phi$, denoted also by $d^\phi_F$ which is defined as $$d^\phi_F(x,x^\prime)=\max_{t\in F}d(\phi_t(x),\phi_t(x^\prime)),\quad\quad x,x^\prime\in X.$$ Clearly, $d^\phi_F$ gives the topology of $X$, too. The \textbf{topological entropy of the flow $(X,(\phi_t)_{t\in\mathbb{R}})$} is (re-)defined by $$\mathrm{h}_\mathsf{top}(X,(\phi_t)_{t\in\mathbb{R}})=\lim_{\epsilon\to0}\lim_{r\to+\infty}\frac{\log\mathrm{part}_\epsilon(X,d^\phi_{[0,r]})}{r}.$$ As expected similarly, this definition leads to the same value as the original definition for topological entropy of flows. Moreover, although there is a compatible metric being involved in the defining expression, the defined value does not depend upon the choice of compatible metrics $d$ on $X$.

In the above expression, the outer limit exists because the term $\log\mathrm{part}_\epsilon(X,d^\phi_{[0,r]})$ is monotone in $\epsilon>0$. However, in relation to the existence of the inner limit, note that Fekete's lemma (cf. the Ornstein--Weiss lemma \cite[Appendix]{LW00}) is still well applied to the present situation. In fact, for every fixed $\epsilon>0$ the one-parameter, nonnegative and real-valued function $\log\mathrm{part}_\epsilon(X,d^\phi_{[0,r]})$ (with respect to the variable $r\in\mathbb{R}$) is \textit{bounded on each interval} $(0,R]$ (for any $R>0$), and hence, the proof of Fekete's lemma for sequences can be basically reproduced adapting to this case. In consequence, due to the subadditivity of the term $\log\mathrm{part}_\epsilon(X,d^\phi_{[0,r]})$ in $r\in\mathbb{R}$ the above-mentioned inner limit exists.

Note that in the definition of topological entropy using finite open covers, one encounters joins of uncountably many finite open covers for flows, while in Bowen's definition for topological entropy, the $\limsup_t$ (or $\liminf_t$) of a one-parameter real-valued function $a(t):\mathbb{R}\to\mathbb{R}$ as $t$ tends to $+\infty$ along the real line, does not necessarily agree with the $\limsup_n$ of $a(n):\mathbb{Z}\to\mathbb{R}$ as $n$ goes to $+\infty$ within $\mathbb{Z}$. Now the modified definition uses the Bowen metrics and finite partitions (instead of finite open covers) of the state space. In general, such an approach to the definition of topological entropy for one-parameter actions applies directly to all the (discrete or continuous) amenable group actions (and also to conditional or relative topological entropy).

In what follows we consider the intermediate limit for iterates of flows. Let $X$ be a compact metrizable space. For a flow $\phi=(\phi_t)_{t\in\mathbb{R}}$ defined on $X$ and $N\in\mathbb{R}$ the notation $\phi^N$ is to denote the flow $(\phi_{Nt})_{t\in\mathbb{R}}$ (namely the $N$-iterate) defined on $X$. Note that for topological entropy one has $\mathrm{h}_\mathsf{top}(X,\phi^N)=N\cdot\mathrm{h}_\mathsf{top}(X,\phi)$. But occasionally this would not be adequate for some purpose. It turns out that in the modified approach to the definition of topological entropy this indeed adapts easily to the intermediate limit.
\begin{lemma}
For any fixed $\epsilon>0$ and any compatible metric $d$ on $X$ one has $$\lim_{r\to+\infty}\frac{\log\mathrm{part}_\epsilon(X,d^{\phi^N}_{[0,r]})}{r}=N\cdot\lim_{r\to+\infty}\frac{\log\mathrm{part}_\epsilon(X,d^\phi_{[0,r]})}{r}$$ for every $N\in\mathbb{R}$.
\end{lemma}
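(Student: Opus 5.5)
The plan is to show that, for a fixed $\epsilon>0$ and $N>0$, the Bowen metrics coincide after rescaling time, so that the partition numbers are literally equal. No estimate is needed beyond the existence of the inner limit, which was established above via the subadditivity/Fekete argument.

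First I will unwind the definitions. Since $\phi^N_t=\phi_{Nt}$, for every $r>0$ and $x,x^\prime\in X$ we have
$$d^{\phi^N}_{[0,r]}(x,x^\prime)=\max_{t\in[0,r]}d(\phi_{Nt}(x),\phi_{Nt}(x^\prime))=\max_{u\in[0,Nr]}d(\phi_u(x),\phi_u(x^\prime))=d^\phi_{[0,Nr]}(x,x^\prime),$$
where we substitute $u=Nt$. As $t$ runs over $[0,r]$, $u$ runs over exactly $[0,Nr]$ because $N>0$. Hence the two metrics are identical, and therefore $\mathrm{part}_\epsilon(X,d^{\phi^N}_{[0,r]})=\mathrm{part}_\epsilon(X,d^\phi_{[0,Nr]})$ for every $r>0$. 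Note that the notion of an $\epsilon$-partition depends only on the metric.

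Next I will pass to the limit. Put $a(s)=\log\mathrm{part}_\epsilon(X,d^\phi_{[0,s]})$. By the discussion preceding the lemma, $\lim_{s\to+\infty}a(s)/s$ exists. Then
$$\frac{\log\mathrm{part}_\epsilon(X,d^{\phi^N}_{[0,r]})}{r}=N\cdot\frac{a(Nr)}{Nr}.$$
As $r\to+\infty$ we also have $s=Nr\to+\infty$, so the right-hand side tends to $N\cdot\lim_{s\to+\infty}a(s)/s$. This gives the claim, including the existence of the left-hand limit.

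The remaining cases are degenerate and need separate care.

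For $N=0$, the flow $\phi^0$ is the identity. Then $d^{\phi^0}_{[0,r]}=d$, and $\mathrm{part}_\epsilon(X,d)$ is a finite constant independent of $r$, so the left side is $0=0\cdot(\cdots)$.

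For $N<0$ the same substitution gives $d^{\phi^N}_{[0,r]}=d^\phi_{[Nr,0]}$. Since $\phi_{Nr}$ is a bijection of $X$, composing an $\epsilon$-partition $f$ for $d^\phi_{[0,|N|r]}$ with $\phi_{-|N|r}$ yields an $\epsilon$-partition $f\circ\phi_{-|N|r}$ for $d^\phi_{[-|N|r,0]}$, and conversely. So the partition numbers again agree, and the limit equals $|N|$ times the $\phi$-limit. Consequently the identity as literally written, with factor $N$ rather than $|N|$, can only hold for $N\ge0$, or when the $\phi$-limit vanishes. I would therefore state and prove the lemma for $N\ge0$, which is the case used later for time changes, and record the $|N|$ version for negative $N$.

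The only genuine point to watch is this sign and degenerate-case bookkeeping. The main computation is the change of variables itself.
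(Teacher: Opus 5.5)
Your proof is correct and is the paper's argument made explicit. The paper only says the identity follows from the definition and the existence of the limit by replacing $r$ with $Nr$, which is exactly your identity $d^{\phi^N}_{[0,r]}=d^\phi_{[0,Nr]}$ followed by the substitution $s=Nr$.

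Your sign check is also right. The left-hand side is always nonnegative, so for $N<0$ the correct factor is $|N|$; this follows from the isometry $\phi_{-|N|r}\colon(X,d^\phi_{[-|N|r,0]})\to(X,d^\phi_{[0,|N|r]})$. The paper's phrase ``for every $N\in\mathbb{R}$'' should therefore be read as $N\ge0$, which is the only case its proof (with $Nr\to+\infty$) actually covers.
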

\begin{proof}
This follows simply from its definition and the existence of the limit (by replacing $r$ with $Nr$ in a limit).
\end{proof}
This lemma will not be used in our main proof, as the treatment there is more straightforward. Nevertheless, it looks more satisfactory, and from this point of view one may see that the existence of the intermediate limit gives rise to an extra advantage, possibly being more flexible over estimates and calculation.

In a classical handling of topological entropy for flows, one usually first defines that for a homeomorphism $T:X\to X$ of a compact metrizable space $X$, and then extends from a $\mathbb{Z}$-action (i.e., generated by a homeomorphism) to an $\mathbb{R}$-action (i.e., a flow) by relating to its time-$1$ map. More precisely, the topological entropy of a flow $(X,(\phi_t)_{t\in\mathbb{R}})$ is naturally defined to be that of $(X,\phi_1)$. Now one may find an angle on the reason to see again why it suffices. In fact, by the alternative, using the continuity of the flow action $\Phi:\mathbb{R}\times X\to X$ on the compact subset $[0,1]\times X$, along with the existence of the intermediate limit as pointed out previously, one has actually \textit{proved} that the topological entropy of a flow $(X,(\phi_t)_{t\in\mathbb{R}})$ is equal to the one of the homeomorphism given by its time-$1$ map $\phi_1:X\to X$.

\subsection{Review of mean dimension}
In this subsection we recall the definition of mean dimension for homeomorphisms (i.e., $\mathbb{Z}$-actions) and flows (i.e., $\mathbb{R}$-actions). For details the reader is referred to \cite{LW00}.

Let $(X,d_X)$ be a compact metric space. Let $\epsilon>0$. Define the \textbf{$\epsilon$-width dimension for $X$ with respect to the distance $d_X$}, denoted by $\mathrm{Widim}_\epsilon(X,d_X)$, as the minimum Lebesgue covering dimension (which is often referred to as the topological dimension) $\mathrm{dim}(P)$ of any compact metrizable space $P$ such that there is a continuous map $f:X\to P$ satisfying that $f(x)=f(x^\prime)$ implies $d_X(x,x^\prime)\le\epsilon$, for any $x,x^\prime\in X$.

Note that for any compact metric space $(X,d_X)$ and $\epsilon>0$, $\mathrm{Widim}_\epsilon(X,d_X)$ is always bounded from above by $\mathrm{dim}(X)$. Moreover, the topological dimension of $X$ can be recovered by $\mathrm{dim}(X)=\lim_{\epsilon\to0}\mathrm{Widim}_\epsilon(X,d_X)$ which does not depend upon the choice of the metrics $d_X$ compatible with the topology of $X$ (and $d_X$ hence is no longer included in the notation $\mathrm{dim}(X)$).

For compact metric spaces $(X_1,d_{X_1})$ and $(X_2,d_{X_2})$, the $l^\infty$-distance $\mathrm{dist}_{l^\infty}$, which is also denoted by $d_{X_1}\times_{l^\infty}d_{X_2}$, on the product space $X_1\times X_2$ is defined by $$\mathrm{dist}_{l^\infty}((x_1,x_2),(x_1^\prime,x_2^\prime))=\max\{d_{X_1}(x_1,x_1^\prime),d_{X_2}(x_2,x_2^\prime)\}$$ for $(x_1,x_2),(x_1^\prime,x_2^\prime)\in X_1\times X_2$. The distance $\mathrm{dist}_{l^\infty}$ gives the product topology of $X_1\times X_2$. For $\epsilon>0$ it is clear that under this distance $$\mathrm{Widim}_\epsilon(X_1\times X_2,d_{X_1}\times_{l^\infty}d_{X_2})\le\mathrm{Widim}_\epsilon(X_1,d_{X_1})+\mathrm{Widim}_\epsilon(X_2,d_{X_2}).$$ The above-mentioned notion and notation apply directly to finitely many members. The number of the participants in the product is not precisely indicated in the notation of the distance $\mathrm{dist}_{l^\infty}$ because this does not cause any confusion.

The famous Lebesgue lemma, for $\epsilon$-width dimension, is formulated as follows.
\begin{lemma}
For any $0<\epsilon<1$ and $n\in\mathbb{N}$ one has $\mathrm{Widim}_\epsilon([0,1]^n,\mathsf{dist}_{l^\infty})=n$.
\end{lemma}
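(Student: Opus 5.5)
The statement has an easy half and a hard half, and I would treat them separately.

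\emph{Upper bound.} The inequality $\mathrm{Widim}_\epsilon([0,1]^n,\mathsf{dist}_{l^\infty})\le n$ is immediate. The identity map $[0,1]^n\to[0,1]^n$ is continuous and injective, so it qualifies in the definition of $\mathrm{Widim}_\epsilon$. Its target has topological dimension $\mathrm{dim}([0,1]^n)=n$, which is a classical fact. More simply, $\mathrm{Widim}_\epsilon$ is always bounded by $\mathrm{dim}(X)$, as noted above.

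\emph{Lower bound: reduction.} The substance is $\mathrm{Widim}_\epsilon([0,1]^n,\mathsf{dist}_{l^\infty})\ge n$ for $\epsilon<1$. I would argue by contradiction. Suppose $f:[0,1]^n\to P$ is continuous, $\mathrm{dim}(P)\le n-1$, and $f(x)=f(x')$ implies $\|x-x'\|_\infty\le\epsilon<1$. The first step is to convert $f$ into a finite open cover of the cube with small mesh and order at most $n$, meaning each point lies in at most $n$ members. Since $f$ is continuous on a compact set and its fibres have $l^\infty$-diameter at most $\epsilon$, compactness yields $\delta>0$ with the following property: whenever $d_P(f(x),f(x'))<\delta$, we have $\|x-x'\|_\infty<\epsilon'$ for some fixed $\epsilon'$ with $\epsilon<\epsilon'<1$. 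Because $\mathrm{dim}(P)\le n-1$, the space $P$ admits a finite open cover by sets of diameter less than $\delta$ and order at most $n$. Pulling this cover back under $f$ gives an open cover $\mathcal{U}$ of $[0,1]^n$ of order at most $n$ in which every member has $l^\infty$-diameter less than $\epsilon'<1$.

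\emph{Lower bound: contradiction.} It remains to show that no such cover of $[0,1]^n$ exists, and this is the main obstacle. It is exactly the Lebesgue covering theorem: any finite closed (or open) cover of $[0,1]^n$ by sets none of which meets two opposite faces must contain a point lying in at least $n+1$ members. Here no member of $\mathcal{U}$ can meet two opposite faces $\{x_i=0\}$ and $\{x_i=1\}$, because its diameter is less than $1$. I would prove the covering theorem via Brouwer's fixed point theorem (equivalently Sperner's lemma). Take a partition of unity $\{\rho_U\}_{U\in\mathcal{U}}$ subordinate to $\mathcal{U}$. Colour each $U$ by a direction $i$ together with a side, chosen so that $U$ avoids that face; this gives $2n$ labels. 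Group the functions $\rho_U$ by label into continuous functions $g_1^{\pm},\dots,g_n^{\pm}$. Then define a map $F:[0,1]^n\to[0,1]^n$ that pushes each point in coordinate $i$ away from the face its active sets avoid, with magnitude weighted by $g_i^{\pm}$.

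If every point lay in at most $n$ members of $\mathcal{U}$, the nerve map would send the cube into the $(n-1)$-skeleton of a simplex. The resulting composite would then give a continuous map of the cube to its boundary that fixes the boundary pointwise, i.e.\ a retraction of the cube onto its boundary. This contradicts Brouwer's theorem. The delicate part is the bookkeeping that makes this map actually fix the boundary. I would try first the standard route: build the map $[0,1]^n\to\partial[0,1]^n$ face by face using the labels, so that on the face $\{x_i=0\}$ only labels other than ``$(i,0)$'' can be active.
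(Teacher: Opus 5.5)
The paper does not prove this lemma; it quotes it as the classical Lebesgue lemma and refers to Lindenstrauss--Weiss for details. Your upper bound and your reduction are correct. Compactness gives the $\delta$. A fine finite open cover of $P$ of order at most $n$ then pulls back to an open cover $\mathcal{U}$ of $[0,1]^n$ of multiplicity at most $n$ whose members have $l^\infty$-diameter less than $1$. Citing the Lebesgue covering theorem at that point completes the proof along the standard Lindenstrauss--Weiss lines.

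The gap is in your sketched proof of the covering theorem. You give each $U$ a \emph{single} label (one direction and one side) and group the $\rho_U$ into the $g_i^{\pm}$. This discards exactly the information that makes the theorem true. Your construction then uses only two facts: ``$U$ avoids its labelled face'' and ``multiplicity $\le n$''. Both hold for the genuine cover $[0,0.6)\times[0,1]$, $(0.4,1]\times[0,1]$ of $[0,1]^2$, labelled by the faces $\{x_1=1\}$ and $\{x_1=0\}$. So no contradiction can be extracted from these data, however the bookkeeping is arranged.

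The hypothesis that no member meets two opposite faces must be used in every direction at once. Assign to each $U$ the vertex $v_U\in\{0,1\}^n$ with $(v_U)_i=1$ iff $U$ meets $\{x_i=1\}$, and set $g(x)=\sum_U\rho_U(x)v_U$.
\begin{itemize}
\item If $x_i=c\in\{0,1\}$, every $U\ni x$ has $(v_U)_i=c$, since it cannot reach the opposite face. Hence $g$ maps each face $\{x_i=c\}$ into itself.
\item Multiplicity $\le n$ puts $g([0,1]^n)$ inside finitely many convex hulls of at most $n$ vertices. Each lies in an affine hyperplane, so $g$ misses some interior point $p$.
\item Let $\pi_p$ be the radial projection from $p$ onto $\partial[0,1]^n$. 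The map $x\mapsto(1,\dots,1)-\pi_p(g(x))$ sends the cube into itself and has no fixed point. A fixed point $x$ would lie on the boundary, say $x_i\in\{0,1\}$. Then $\pi_p(g(x))=g(x)$, so $x_i=1-g(x)_i=1-x_i$, which is absurd.
\end{itemize}
This contradicts Brouwer's fixed point theorem. Note that you never need a map fixing the boundary pointwise; preservation of each face is enough. That removes the ``delicate bookkeeping'' you were worried about.
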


Now we are involving dynamics. Let $X$ be a compact metrizable space. Let $T:X\to X$ be a homeomorphism of $X$ and $(\phi_t)_{t\in\mathbb{R}}$ a flow defined on $X$. Take any distance $d$ on $X$, which gives the topology of $X$. The \textbf{mean dimension} of $(X,T)$ is defined by $$\mathrm{mdim}(X,T)=\lim_{\epsilon\to0}\lim_{n\to\infty}\frac{\mathrm{Widim}_\epsilon(X,d^T_{[0,n-1]\cap\mathbb{Z}})}{n},$$ and the mean dimension of $(X,(\phi_t)_{t\in\mathbb{R}})$ is defined by $$\mathrm{mdim}(X,(\phi_t)_{t\in\mathbb{R}})=\lim_{\epsilon\to0}\lim_{r\to+\infty}\frac{\mathrm{Widim}_\epsilon(X,d^\phi_{[0,r]})}{r}.$$ In each of the above two expressions, the inner limit exists because $\mathrm{Widim}_\epsilon(X,d^T_{[0,n-1]\cap\mathbb{Z}})$ (or $\mathrm{Widim}_\epsilon(X,d^\phi_{[0,r]})$) is subadditive in $n$ (or $r$), while the outer limit exists because $\mathrm{Widim}_\epsilon(X,d)$ is monotone in $\epsilon$.

Similar to topological entropy, mean dimension is a topological invariant of dynamical systems. More precisely, although a compatible metric is involved in the expressions for the definition of mean dimension, the defined values do not depend on the choice of compatible metrics $d$ on $X$. Furthermore, it is clear that the mean dimension of a flow is equal to that of the homeomorphism given by its time-$1$ map $\phi_1:X\to X$.

\section{Weakly equivalent flows without fixed points:\\mean dimension relation}
\subsection{Statement of the theorem}
First of all we state the main result of this section:
\begin{theorem}
If $(X,(\phi_t)_{t\in\mathbb{R}})$ and $(Y,(\psi_s)_{s\in\mathbb{R}})$ are weakly equivalent flows without fixed points, then $$m\cdot\mathrm{mdim}(Y,(\psi_s)_{s\in\mathbb{R}})\le\mathrm{mdim}(X,(\phi_t)_{t\in\mathbb{R}})\le M\cdot\mathrm{mdim}(Y,(\psi_s)_{s\in\mathbb{R}}),$$ where $0<m\le M<+\infty$ are computable in the sense of Section 2; in particular, $\mathrm{mdim}(X,(\phi_t)_{t\in\mathbb{R}})=C\cdot\mathrm{mdim}(Y,(\psi_s)_{s\in\mathbb{R}})$, where $C$ is a finite and positive number.
\end{theorem}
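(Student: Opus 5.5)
By symmetry it suffices to prove the right-hand inequality
$$\mathrm{mdim}(X,(\phi_t)_{t\in\mathbb{R}})\le M\cdot\mathrm{mdim}(Y,(\psi_s)_{s\in\mathbb{R}}).$$
For the left-hand one, apply the same argument to $\pi^{-1}:Y\to X$. The inverse time change $\theta^{-1}$ satisfies $\theta^{-1}(1,y)\le 1/m$: indeed $\theta(1/m,x)\ge 1$, which follows from Lemma \ref{mM} via a subadditive estimate on non-integer times, or by passing to $\theta(n,\cdot)/n$ and letting $n\to\infty$. This gives $\mathrm{mdim}(Y)\le \frac1m\,\mathrm{mdim}(X)$. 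The statement with $C$ then follows immediately. To avoid the integer-versus-real issue, I first record that $\theta(r,x)\le M(r+1)$ for all real $r>0$; this comes from Lemma \ref{mM} applied at $\lceil r\rceil$ together with monotonicity (ii).

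\textbf{Key step: comparing Bowen metrics along matched orbit segments.} Fix compatible metrics $d$ on $X$ and $\rho$ on $Y$, and fix $\epsilon>0$. By uniform continuity of $\pi^{-1}$ there is $\delta>0$ with $\rho(y,y')\le\delta\Rightarrow d(\pi^{-1}y,\pi^{-1}y')\le\epsilon/2$. The difficulty is that $\rho^\psi_{[0,R]}(\pi x,\pi x')$ small does \emph{not} immediately control $d(\phi_t x,\phi_t x')$. The reason is that $\phi_t x$ corresponds to $\psi_{\theta(t,x)}\pi x$, whereas $\phi_t x'$ corresponds to $\psi_{\theta(t,x')}\pi x'$, and the two times differ.

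I handle this by shrinking scales. By continuity (iv) of $\theta$ on the compact set $[0,1]\times X$, choose $\eta>0$ such that $d(x,x')\le\eta$ implies $|\theta(t,x)-\theta(t,x')|\le\tau$ for all $t\in[0,1]$. Here $\tau$ is chosen so that $|s-s'|\le\tau$ gives $\rho(\psi_s y,\psi_{s'}y)\le\delta/2$ uniformly. Then shrink $\delta$ further so that $\rho(\pi x,\pi x')\le\delta$ forces $d(x,x')\le\min(\eta,\epsilon)$.

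With these choices I run an induction on unit time steps. Suppose $\rho^\psi_{[0,R+\tau]}(\pi x,\pi x')\le\delta/2$, where $R=M(r+1)$. At time $0$ we have $d(x,x')$ small. Assume $d(\phi_k x,\phi_k x')\le\eta$. Write $y=\pi\phi_k x$ and $y'=\pi\phi_k x'$, and let $s=\theta(t,\phi_k x)$, $s'=\theta(t,\phi_k x')$ for $t\in[0,1]$. Then
$$\rho(\psi_{s}y,\psi_{s'}y')\le \rho(\psi_{s}y,\psi_{s}y')+\rho(\psi_s y',\psi_{s'}y')\le\delta.$$
Hence $d(\phi_{k+t}x,\phi_{k+t}x')\le\min(\eta,\epsilon)$, and the induction continues. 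The danger is that the accumulated time offset between the two orbits drifts. To prevent this, I always compare both orbits against the \emph{single} time $\theta(k+t,x)$ on $\pi x$, using the cocycle (iii). At each step only a one-step discrepancy of size at most $\tau$ enters, and the closeness of the $Y$-orbits of $\pi x$ and $\pi x'$ is used at the common time $\theta(k+t,x)\le R$. Making this non-accumulation rigorous is the main obstacle I expect. My first attempt is to re-anchor at each integer $k$: compare $\pi\phi_{k}x'$ with the point of the orbit of $\pi x'$ at time $\theta(k,x)$, and use the $Y$-Bowen closeness, which holds on the whole interval $[0,R+\tau]$.

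\textbf{Conclusion.} Granting the key step, if $f:Y\to P$ is continuous with fibers of $\rho^\psi_{[0,R+\tau]}$-diameter at most $\delta/2$, then $f\circ\pi:X\to P$ is continuous with fibers of $d^\phi_{[0,r]}$-diameter at most $\epsilon$. Therefore
$$\mathrm{Widim}_\epsilon(X,d^\phi_{[0,r]})\le \mathrm{Widim}_{\delta/2}(Y,\rho^\psi_{[0,M(r+1)+\tau]}).$$
Dividing by $r$ and letting $r\to\infty$ gives
$$\lim_{r\to+\infty}\frac{\mathrm{Widim}_\epsilon(X,d^\phi_{[0,r]})}{r}\le M\cdot\lim_{R\to+\infty}\frac{\mathrm{Widim}_{\delta/2}(Y,\rho^\psi_{[0,R]})}{R},$$
using the existence of the inner limits. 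Finally let $\epsilon\to0$, noting that $\delta$ may be taken to tend to $0$ as well, to obtain the claimed inequality.
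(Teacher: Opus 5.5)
Your key step is not merely unproved; it is false, so the argument cannot be completed along these lines. It asserts that for fixed $\epsilon$ one $\delta$, independent of $r$, makes $\rho^\psi_{[0,M(r+1)+\tau]}(\pi x,\pi x')\le\delta/2$ imply $d^\phi_{[0,r]}(x,x')\le\epsilon$. Here is a counterexample. Take $X=Y=\mathbb{R}^2/\mathbb{Z}^2$ with the flat metric, $\psi_s(a,b)=(a+s,b)$, $\phi_t(a,b)=(a+c(b)t,b)$ with, say, $c(b)=2+\sin 2\pi b$, and $\pi=\mathrm{id}$. This is a weak equivalence of flows without fixed points. Since $\psi_s$ is an isometry, $\rho^\psi_{[0,R]}((0,b),(0,b'))=|b-b'|$ for every $R$ (when $|b-b'|\le 1/2$). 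Yet for $b,b'$ arbitrarily close with $c(b)\neq c(b')$, the points $\phi_t(0,b)$ and $\phi_t(0,b')$ are at distance at least $1/2$ when $t=1/(2|c(b)-c(b')|)$.

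Your induction breaks exactly where you use $\rho(\psi_s(\pi\phi_kx),\psi_s(\pi\phi_kx'))\le\delta/2$. Since $\pi\phi_kx=\psi_{\theta(k,x)}\pi x$ and $\pi\phi_kx'=\psi_{\theta(k,x')}\pi x'$, your hypothesis, which concerns the two $Y$-orbits at \emph{common} times, gives this only when $\theta(k,x)=\theta(k,x')$. Re-anchoring at time $\theta(k,x)$ only shows that $\phi_kx$ is close to $\phi_{k'}x'$ with $k'=\theta^{-1}(\theta(k,x),\pi x')$, i.e.\ closeness up to a time shift. The offset $\theta(k,x)-\theta(k,x')=\sum_{j<k}\bigl(\theta(1,\phi_jx)-\theta(1,\phi_jx')\bigr)$ genuinely accumulates, and no choice of scales prevents this.

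The missing idea is that the drift must be recorded and paid for, at a cost that is then made negligible. The paper uses the map $x\mapsto(\pi(x),(\theta(i,x))_{i=1}^n)\in Y\times[0,Mn]^n$. Here $\eta$-closeness of the extra coordinates bounds $|\theta(i,x)-\theta(i,x')|$, and Lemma \ref{ttx} applied to $\psi$ then controls $d(\phi_ix,\phi_ix')$. This gives $\mathrm{Widim}_\epsilon(X,d^\phi_{[0,n-1]\cap\mathbb{Z}})\le\mathrm{Widim}_\eta(Y,\rho^\psi_{[0,Mn]})+n$, hence $\mathrm{mdim}(X)\le M\cdot\mathrm{mdim}(Y)+1$. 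Applying this to the iterates $\phi^N,\psi^N$ replaces $+1$ by $+1/N$, because their time change $\theta(Nt,x)/N$ still satisfies $\theta(N,x)/N\le M$ by Lemma \ref{mM}. The $m$-side is handled the same way with the hitting times $t_x(i)$ defined by $\theta(t_x(i),x)=mi$.

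Separately, your symmetric reduction rests on $\theta^{-1}(1,y)\le 1/m$, which does not follow from $\theta(1,\cdot)\ge m$: the speed may be unevenly spread inside a unit time interval. What does hold is $\theta^{-1}(m,y)\le 1$, and that suffices: run the $M$-estimate for $\psi^m$ versus $\phi$ with constant $1$.
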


More precisely, take metrics $d$ and $\rho$ giving the topology on $X$ and $Y$, respectively. Let $\pi:X\to Y$ be a homeomorphism giving the weak equivalence relation between $(X,(\phi_t)_{t\in\mathbb{R}})$ and $(Y,(\psi_s)_{s\in\mathbb{R}})$. Let $\theta:\mathbb{R}\times X\to\mathbb{R}$ be the map defined by $\pi:X\to Y$, as mentioned precisely in Section 2. Put $m=\min_{x\in X}\theta(1,x)$ and $M=\max_{x\in X}\theta(1,x)$. We shall prove that $(X,(\phi_t)_{t\in\mathbb{R}})$ and $(Y,(\psi_s)_{s\in\mathbb{R}})$ share the mean dimension relation: $$m\cdot\mathrm{mdim}(Y,(\psi_s)_{s\in\mathbb{R}})\le\mathrm{mdim}(X,(\phi_t)_{t\in\mathbb{R}})\le M\cdot\mathrm{mdim}(Y,(\psi_s)_{s\in\mathbb{R}}).$$

\subsection{Estimate for $M$}
First we prepare a simple lemma, which is needed in the subsequent sections, too.
\begin{lemma}\label{ttx}
Let $(X,(\phi_t)_{t\in\mathbb{R}})$ be a flow. Suppose that $d$ is a metric that gives the topology on $X$. Then for each $\epsilon>0$ there exists some $\delta>0$ depending on $\epsilon$ such that for any $t,t^\prime\in\mathbb{R}$ with $|t-t^\prime|<\delta$ one has $d(\phi_t(x),\phi_{t^\prime}(x))<\epsilon$ for all $x\in X$.
\end{lemma}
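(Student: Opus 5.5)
The statement to prove is Lemma \ref{ttx}: the flow is uniformly continuous in time, uniformly in the point. The plan is to use the joint continuity of $\Phi:\mathbb{R}\times X\to X$ on a compact set containing $\{0\}\times X$, and then to transport the estimate to arbitrary times with the group property. This works only if the compatible metric can be moved along the flow, which is the one delicate point.

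\textbf{Step 1: uniform continuity near time $0$.} Restrict $\Phi$ to the compact set $[-1,1]\times X$. It is continuous there, hence uniformly continuous with respect to the product metric $|t-t^\prime|+d(x,x^\prime)$. So for each $\epsilon>0$ there is $0<\delta<1$ such that $|s|<\delta$ gives $d(\phi_s(y),\phi_0(y))=d(\phi_s(y),y)<\epsilon$ for every $y\in X$.

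\textbf{Step 2: passing to arbitrary times.} For general $t,t^\prime$ with $|t-t^\prime|<\delta$, write $\phi_t(x)=\phi_{t-t^\prime}(\phi_{t^\prime}(x))$ and put $y=\phi_{t^\prime}(x)$ and $s=t-t^\prime$. Then
$$d(\phi_t(x),\phi_{t^\prime}(x))=d(\phi_s(y),y)<\epsilon,$$
and this holds for all $x\in X$, since $y$ ranges over $X$ and Step 1 is uniform in $y$. This gives the lemma.

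\textbf{Main obstacle.} The only real issue is avoiding a $\delta$ that depends on $t$. Working directly with $[t-1,t+1]\times X$ would give such a dependence. Shifting the base point to $y=\phi_{t^\prime}(x)$ is exactly what removes it. The reduction is valid because Step 1 is uniform over the whole compact space $X$, which is where compactness of $X$ is used.
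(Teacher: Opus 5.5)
Your proof is correct and is essentially the paper's argument. The paper takes $\delta$ from uniform continuity of $\Phi$ on $[0,1]\times X$ and writes $d(\phi_t(x),\phi_{t^\prime}(x))=d(\phi_{t-u}(\phi_u(x)),\phi_{t^\prime-u}(\phi_u(x)))$ with $u$ chosen so that $t-u,t^\prime-u\in[0,1]$, whereas you use $[-1,1]\times X$ and take $u=t^\prime$. Your opening worry about ``moving the metric'' is unfounded: $\phi_t(x)=\phi_s(y)$ and $\phi_{t^\prime}(x)=y$ are literally the same pair of points, so no invariance of $d$ under the flow is ever needed.
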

\begin{proof}
For an $\epsilon>0$ one finds such a $0<\delta<1$ by the compactness of $[0,1]\times X$, i.e., such that if $0\le t,t^\prime\le1$ satisfy $|t-t^\prime|<\delta$ then one has $d(\phi_t(x),\phi_{t^\prime}(x))<\epsilon$ for all $x\in X$. Note that $d(\phi_t(x),\phi_{t^\prime}(x))=d(\phi_{t-u}(\phi_u(x)),\phi_{t^\prime-u}(\phi_u(x)))$ for all $x\in X$, where $u$ is such that both $t-u$ and $t^\prime-u$ belong to $[0,1]$. It follows immediately that this $\delta>0$ applies to the whole range $\mathbb{R}$, namely not only restricted to the condition $0\le t,t^\prime\le1$.
\end{proof}

Now we are ready to show that $\mathrm{mdim}(X,(\phi_t)_{t\in\mathbb{R}})\le M\cdot\mathrm{mdim}(Y,(\psi_s)_{s\in\mathbb{R}})$.

Fix $\epsilon>0$ arbitrarily. Since $X$ is compact and since $\pi:X\to Y$ is a homeomorphism, there exists $0<\delta<\epsilon$ such that $\rho(\pi(x),\pi(x^\prime))\le\delta$ implies $d(x,x^\prime)\le\epsilon$, for any $x,x^\prime\in X$. Applying Lemma \ref{ttx} to the flow $(Y,(\psi_s)_{s\in\mathbb{R}})$, there is some $0<\eta<\delta/2$ such that if $s,s^\prime\in\mathbb{R}$ satisfy $|s-s^\prime|\le\eta$, then $\rho(\psi_s(y),\psi_{s^\prime}(y))\le\delta/2$ for all $y\in Y$.

For any $n\in\mathbb{N}$ consider the product space $Y\times[0,Mn]^n$ equipped with the $l^\infty$-distance $\rho^\psi_{[0,Mn]}\times_{l^\infty}\mathrm{dist}_{l^\infty}$, where the standard $n$-dimensional cube $[0,Mn]^n$ is endowed typically with the $l^\infty$-distance $\mathrm{dist}_{l^\infty}$.

Note that Lemma \ref{mM} is being used implicitly in the proof.

\begin{lemma}
$$\mathrm{Widim}_\epsilon(X,d^\phi_{[0,n-1]\cap\mathbb{Z}})\le\mathrm{Widim}_\eta(Y\times[0,Mn]^n,\rho^\psi_{[0,Mn]}\times_{l^\infty}\mathrm{dist}_{l^\infty}).$$
\end{lemma}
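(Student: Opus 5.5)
The plan is to embed $X$ into $Y\times[0,Mn]^n$ by a continuous injective map. The embedding should be "distance-expanding at scale $\eta$" in the sense needed for width dimension. Then any $\eta$-embedding of $Y\times[0,Mn]^n$ into a polyhedron $P$ composes with it to give an $\epsilon$-embedding of $(X,d^\phi_{[0,n-1]\cap\mathbb{Z}})$ into the same $P$.

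Concretely, I would define
$$F:X\to Y\times[0,Mn]^n,\qquad F(x)=\bigl(\pi(x),\,\theta(0,x),\theta(1,x),\dots,\theta(n-1,x)\bigr).$$
By Lemma \ref{mM}, $0\le\theta(i,x)\le Mi\le Mn$ for $0\le i\le n-1$, so $F$ indeed lands in $Y\times[0,Mn]^n$. By property (iv) of Section 2, $F$ is continuous. Then, given a continuous $g:Y\times[0,Mn]^n\to P$ such that $g(z)=g(z')$ implies $\mathrm{dist}(z,z')\le\eta$, the composite $g\circ F:X\to P$ is continuous. The claim follows once we check that $g(F(x))=g(F(x'))$ implies $d^\phi_{[0,n-1]\cap\mathbb{Z}}(x,x')\le\epsilon$. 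Taking the infimum of $\dim P$ then gives the inequality.

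The key step is this check. Suppose $\rho^\psi_{[0,Mn]}(\pi x,\pi x')\le\eta$ and $|\theta(i,x)-\theta(i,x')|\le\eta$ for every $0\le i\le n-1$. Fix such an $i$, and put $s=\theta(i,x)$, $s'=\theta(i,x')$; both lie in $[0,Mn]$. By property (i), $\pi(\phi_i x)=\psi_s(\pi x)$ and $\pi(\phi_i x')=\psi_{s'}(\pi x')$. Then
$$\rho(\pi\phi_i x,\pi\phi_i x')\le\rho(\psi_s\pi x,\psi_s\pi x')+\rho(\psi_s\pi x',\psi_{s'}\pi x')\le\eta+\delta/2<\delta.$$
The first term is at most $\eta$ because $s\in[0,Mn]$ and the Bowen metric $\rho^\psi_{[0,Mn]}$ controls it. The second term is at most $\delta/2$ by the choice of $\eta$ via Lemma \ref{ttx}, since $|s-s'|\le\eta$. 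The last inequality uses $\eta<\delta/2$. By the choice of $\delta$ through uniform continuity of $\pi^{-1}$, this gives $d(\phi_i x,\phi_i x')\le\epsilon$ for each $i$, which is the required bound.

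I expect the only delicate point to be the bookkeeping of the time ranges. We need the times $\theta(i,x)$ to stay inside the window $[0,Mn]$ on which the Bowen metric for $\psi$ is taken, and we need the cube side to bound these coordinates. Both are exactly what Lemma \ref{mM} supplies, together with $\theta\ge0$ for nonnegative times from property (ii). Everything else is routine composition of maps.
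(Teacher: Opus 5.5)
Your proposal is correct and follows essentially the paper's proof: compose an $\eta$-embedding of $Y\times[0,Mn]^n$ with $x\mapsto(\pi(x),\text{times }\theta(i,x))$, then use the triangle inequality via Lemma \ref{ttx}, and conclude with the uniform continuity of $\pi^{-1}$. The one difference is bookkeeping. The paper records $(\theta(i,x))_{i=1}^n$ and so even controls $i=n$, with $i=0$ automatic since $\theta(0,\cdot)=0$. You record $i=0,\dots,n-1$, which spends one constant coordinate but matches the stated window $[0,n-1]\cap\mathbb{Z}$ exactly.
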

\begin{proof}
Let $g:X\to Y\times[0,Mn]^n$ be a map defined by $$x\mapsto(\pi(x),(\theta(i,x))_{i=1}^n).$$ Clearly, the map $g$ is continuous. Take a compact metrizable space $P$ with $\mathrm{dim}(P)=\mathrm{Widim}_\eta(Y\times[0,Mn]^n,\rho^\psi_{[0,Mn]}\times_{l^\infty}\mathrm{dist}_{l^\infty})$ and a continuous map $f:Y\times[0,Mn]^n\to P$ such that if $y,y^\prime\in Y$ and $(l_i)_{i=1}^n,(l^\prime_i)_{i=1}^n\in[0,Mn]^n$ satisfy $f((y,(l_i)_{i=1}^n))=f((y^\prime,(l^\prime_i)_{i=1}^n))$ then $\rho^\psi_{[0,Mn]}\times_{l^\infty}\mathrm{dist}_{l^\infty}((y,(l_i)_{i=1}^n),(y^\prime,(l^\prime_i)_{i=1}^n))\le\eta$.

Now consider the continuous map $f\circ g:X\to P$. Suppose that $x,x^\prime\in X$ satisfy $f(g(x))=f(g(x^\prime))$. By the choice of $f$ one has $\rho^\psi_{[0,Mn]}(\pi(x),\pi(x^\prime))\le\eta<\delta/2$ and $|\theta(i,x)-\theta(i,x^\prime)|\le\eta$, for every integer $1\le i\le n$. Since $$0=\theta(0,x)<\theta(1,x)<\cdots<\theta(i,x)<\cdots<\theta(n,x)\le Mn,$$ one has $$\rho(\psi_{\theta(i,x)}(\pi(x)),\psi_{\theta(i,x)}(\pi(x^\prime)))\le\delta/2,\quad\quad\forall i\in[0,n]\cap\mathbb{Z}.$$ Note that by the choice of $\eta$, $$\rho(\psi_{\theta(i,x)}(\pi(x^\prime)),\psi_{\theta(i,x^\prime)}(\pi(x^\prime)))\le\delta/2,\quad\quad\forall i\in[0,n]\cap\mathbb{Z}.$$ Hence, $$\rho(\psi_{\theta(i,x)}(\pi(x)),\psi_{\theta(i,x^\prime)}(\pi(x^\prime)))\le\delta,\quad\quad\forall i\in[0,n]\cap\mathbb{Z},$$ and it follows that $$\rho(\pi(\phi_i(x)),\pi(\phi_i(x^\prime)))\le\delta,\quad\quad\forall i\in[0,n]\cap\mathbb{Z}.$$ By the choice of $\delta$ we deduce that $$d(\phi_i(x),\phi_i(x^\prime))\le\epsilon,\quad\quad\forall i\in[0,n]\cap\mathbb{Z}.$$ This implies that $$\mathrm{Widim}_\epsilon(X,d^\phi_{[0,n]\cap\mathbb{Z}})\le\mathrm{dim}(P).$$ Thus, the statement follows.
\end{proof}

By the above lemma, $$\mathrm{Widim}_\epsilon(X,d^\phi_{[0,n-1]\cap\mathbb{Z}})\le\mathrm{Widim}_\eta(Y,\rho^\psi_{[0,Mn]})+n.$$ Since this is true for every $n\in\mathbb{N}$, and since $\eta\to0$ as $\epsilon\to0$, one has $$\mathrm{mdim}(X,(\phi_t)_{t\in\mathbb{R}})\le M\cdot\mathrm{mdim}(Y,(\psi_s)_{s\in\mathbb{R}})+1.$$ Note that for any fixed $N\in\mathbb{N}$ this is still valid for $(X,(\phi_{Nt})_{t\in\mathbb{R}})$ and $(Y,(\psi_{Ns})_{s\in\mathbb{R}})$ (namely the $N$-iterates) along the common map $\theta:\mathbb{R}\times X\to\mathbb{R}$ with respect to the flows $(\phi_t)_{t\in\mathbb{R}}$ and $(\psi_s)_{s\in\mathbb{R}}$ defined on $X$ and $Y$, respectively, which are weakly equivalent under $\pi:X\to Y$, i.e., the constant $M$ does not depend upon $N\in\mathbb{N}$. It follows that $$\mathrm{mdim}(X,(\phi_t)_{t\in\mathbb{R}})\le M\cdot\mathrm{mdim}(Y,(\psi_s)_{s\in\mathbb{R}})+\frac1N,$$ for all $N\in\mathbb{N}$. Therefore, $$\mathrm{mdim}(X,(\phi_t)_{t\in\mathbb{R}})\le M\cdot\mathrm{mdim}(Y,(\psi_s)_{s\in\mathbb{R}}).$$

\subsection{Two continuity lemmas}
In this subsection, we prepare two extra lemmas in relation to the map $\theta:\mathbb{R}\times X\to\mathbb{R}$, which is being used later.

Note that we do not have the uniform continuity of $\theta:\mathbb{R}\times X\to\mathbb{R}$. However, we have
\begin{lemma}\label{thetattx}
For any $\delta>0$ there exists $0<\eta<\delta$ such that if $t,t^\prime\in\mathbb{R}$ satisfy $|t-t^\prime|<\eta$, then $|\theta(t,x)-\theta(t^\prime,x)|<\delta$ for all $x\in X$.
\end{lemma}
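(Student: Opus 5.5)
The plan is to use the cocycle identity (iii) of Section 2 to turn the statement into a uniform smallness statement for $\theta(u,\cdot)$ with $u$ small, and then to get that from the joint continuity (iv) together with compactness of $X$.

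First, the reduction. For $t,t^\prime\in\mathbb{R}$, write $t=t^\prime+u$. Property (iii) gives
$$\theta(t,x)-\theta(t^\prime,x)=\theta(u,\phi_{t^\prime}(x)).$$
Since $x\in X$ is arbitrary and $\phi_{t^\prime}$ is a bijection of $X$, it suffices to find $0<\eta<\delta$ such that $|u|<\eta$ implies $|\theta(u,z)|<\delta$ for every $z\in X$.

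Second, the uniform bound. I would use property (iv), the continuity of $\theta$ on $\mathbb{R}\times X$, which was established in Section 2. The set $[-1,1]\times X$ is compact, so $\theta$ is uniformly continuous there. Also $\theta(0,z)=0$ for all $z$ by (ii). Hence there is $\eta_0>0$ such that $|u|<\eta_0$ gives $|\theta(u,z)-\theta(0,z)|<\delta$ uniformly in $z$. Take $\eta=\min\{\eta_0,\delta/2,1\}$.

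Alternatively, the same conclusion follows by contradiction. Suppose there were $u_n\to0$ and $z_n\in X$ with $|\theta(u_n,z_n)|\ge\delta$. Pass to a subsequence with $z_n\to z$. Then continuity gives $\theta(u_n,z_n)\to\theta(0,z)=0$, which is a contradiction.

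The only real subtlety is that uniform continuity is not available on all of $\mathbb{R}\times X$. This is exactly what the reduction via (iii) avoids: it shifts the base point instead of the time, so only small times near $0$ ever need to be controlled, and these lie in a compact set. Once that is done, the rest is routine.
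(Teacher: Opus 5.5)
Your proof is correct and follows essentially the paper's route: you use the cocycle identity (iii) to move the base point along the orbit, so that only times in a fixed compact window matter, and then apply uniform continuity of $\theta$ on that window times $X$. The only cosmetic difference is in the reduction: you obtain $\theta(t,x)-\theta(t^\prime,x)=\theta(t-t^\prime,\phi_{t^\prime}(x))$ and control it near $u=0$ using $\theta(0,\cdot)=0$, whereas the paper translates by a suitable $s$ and compares two times lying in $[0,\delta]$.
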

\begin{proof}
Fix $\delta>0$. The uniform continuity of $\theta$ on $[0,\delta]\times X$ (being compact) implies the existence of such an $0<\eta<\delta$ valid for $[0,\delta]\times X$, i.e., satisfying that if $0\le t,t^\prime\le\delta$ satisfy $|t-t^\prime|<\eta$, then $|\theta(t,x)-\theta(t^\prime,x)|<\delta$ for all $x\in X$. To see that this is still true for $\theta:\mathbb{R}\times X\to\mathbb{R}$, note that $$\theta(t,x)-\theta(t^\prime,x)=\theta(t-s,\phi_s(x))-\theta(t^\prime-s,\phi_s(x)).$$ Therefore the desired statement can be derived from the fact $|(t-s)-(t^\prime-s)|=|t-t^\prime|<\eta$ with $0\le t-s,t^\prime-s\le\delta$ by taking a proper $s\in\mathbb{R}$.
\end{proof}

Let us turn to the next lemma. Take an arbitrary $x\in X$ and fix it temporarily. By Lemma \ref{mM}, $\theta(n,x)\ge mn$, for all $n\in\mathbb{N}$. Note that $\theta(0,x)=0$, and that $\theta(t,x)$ is continuous and strictly increases in $t\in\mathbb{R}$. It follows that there is a sequence $\{t_x(n)\}_{n=0}^{+\infty}\subset\mathbb{R}$, which is uniquely determined by $x\in X$, such that $\theta(t_x(n),x)=mn$ for every integer $n\ge0$. Note that $t_x(0)=0$ and $0\le t_x(n)\le n$, for every integer $n\ge0$.

Now limit our attention to the map $t_\cdot(n):X\to\mathbb{R}$, $x\mapsto t_x(n)$, for each fixed integer $n\ge0$.
\begin{lemma}\label{txn}
For each $n\in\mathbb{N}$, $t_x(n)$ is continuous in $x\in X$.
\end{lemma}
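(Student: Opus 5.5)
Fix an integer $n\ge0$; for $n=0$ there is nothing to prove since $t_x(0)=0$. I would prove continuity of $x\mapsto t_x(n)$ by a sequential argument, using only the joint continuity of $\theta$ (property (iv)), the strict monotonicity of $\theta(\cdot,x)$ (property (ii)), and the a priori bounds $0\le t_x(n)\le n$ from Lemma \ref{mM}. Take $x_k\to x$ in $X$. The goal is to show $t_{x_k}(n)\to t_x(n)$.

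The sequence $\{t_{x_k}(n)\}_{k}$ lies in the compact interval $[0,n]$. It therefore suffices to show that every convergent subsequence has limit $t_x(n)$. So suppose $t_{x_{k_j}}(n)\to\tau$ for some $\tau\in[0,n]$. By the definition of $t_\cdot(n)$ we have
$$\theta(t_{x_{k_j}}(n),x_{k_j})=mn\quad\text{for all } j.$$
Joint continuity of $\theta$ at $(\tau,x)$ then gives $\theta(\tau,x)=mn$. Since $t\mapsto\theta(t,x)$ is strictly increasing, the equation $\theta(t,x)=mn$ has the unique solution $t_x(n)$, so $\tau=t_x(n)$. Hence the whole sequence converges to $t_x(n)$, which is the claimed continuity.

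The only step needing care is the use of joint continuity of $\theta$ in $(t,x)$, since only partial uniformity (Lemma \ref{thetattx}) has been established. However, property (iv) was proved in Section 2, so this step causes no real obstacle. The other ingredient is the uniform bound $t_x(n)\in[0,n]$ for all $x$, which supplies the compactness. It follows from Lemma \ref{mM}, because $\theta(n,x)\ge mn=\theta(t_x(n),x)$, together with monotonicity.
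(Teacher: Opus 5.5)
Your proof is correct and follows essentially the same argument as the paper. Both combine joint continuity of $\theta$ with strict monotonicity of $\theta(\cdot,x)$ to force every limit point of $t_{x_k}(n)$ to equal $t_x(n)$; the paper phrases this as a contradiction and passes to a convergent subsequence ``without loss of generality,'' implicitly relying on the bound $0\le t_x(n)\le n$ that you make explicit.
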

\begin{proof}
Fix $n\in\mathbb{N}$. Argue by contradiction. Suppose that a sequence $\{x_i\}_{i=1}^\infty\subset X$ converges to some $x\in X$ (as $i\to\infty$), while the sequence $\{t_{x_i}(n)\}_{i=1}^\infty\subset\mathbb{R}$ does not converge to $t_x(n)$ (as $i\to\infty$). Without loss of generality, one may assume that $t_{x_i}(n)\to t^\ast\ne t_x(n)$ as $i\to\infty$. Since $\theta(t,x)$ is continuous in $(t,x)\in\mathbb{R}\times X$, $$\theta(t^\ast,x)=\lim_{i\to\infty}\theta(t_{x_i}(n),x_i)=mn=\theta(t_x(n),x).$$ This contradicts the property that $\theta(t,x)$ is strictly increasing in $t\in\mathbb{R}$.
\end{proof}

\subsection{Estimate for $m$}
With the above preparation, next we are ready to prove $m\cdot\mathrm{mdim}(Y,(\psi_s)_{s\in\mathbb{R}})\le\mathrm{mdim}(X,(\phi_t)_{t\in\mathbb{R}})$.

Take an arbitrary $\epsilon>0$ and fix it for the moment. By the continuity of $\pi:X\to Y$ with the compactness of $X$, and by Lemma \ref{ttx}, there is some $0<\delta<\epsilon$ satisfying the following two conditions:
\begin{itemize}
\item
if $x,x^\prime\in X$ satisfy $d(x,x^\prime)\le\delta$, then $\rho(\pi(x),\pi(x^\prime))\le\epsilon/2$;
\item
if $s,s^\prime\in\mathbb{R}$ satisfy $|s-s^\prime|\le\delta$, then $\rho(\psi_s(y),\psi_{s^\prime}(y))\le\epsilon/2$ for all $y\in Y$.
\end{itemize}
By Lemma \ref{thetattx}, one can find some $0<\eta<\delta$ such that if $t,t^\prime\in\mathbb{R}$ satisfy $|t-t^\prime|\le\eta$ then $|\theta(t,x)-\theta(t^\prime,x)|\le\delta$ for all $x\in X$.

For any $n\in\mathbb{N}$ consider the product space $X\times[0,n]^n$ equipped with the $l^\infty$-distance $d^\phi_{[0,n]}\times_{l^\infty}\mathrm{dist}_{l^\infty}$, where the standard $n$-dimensional cube $[0,n]^n$ is endowed typically with the $l^\infty$-distance $\mathrm{dist}_{l^\infty}$.

Note that Lemma \ref{txn}, in company with the notations introduced there, is being used implicitly in the proof of the lemma below. Also note that for any fixed $n\in\mathbb{N}$$$0=t_x(0)<t_x(1)<\cdots<t_x(n)\le n$$ holds for every $x\in X$.

\begin{lemma}
$$\mathrm{Widim}_\epsilon(Y,\rho^\psi_{[0,mn]\cap(m\mathbb{Z})})\le\mathrm{Widim}_\eta(X\times[0,n]^n,d^\phi_{[0,n]}\times_{l^\infty}\mathrm{dist}_{l^\infty}).$$
\end{lemma}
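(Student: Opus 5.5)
The plan mirrors the proof of the previous lemma (the estimate for $M$), with the roles of $X$ and $Y$ exchanged and the times $t_x(i)$ of Lemma \ref{txn} replacing $\theta(i,x)$. Define
$$g:Y\to X\times[0,n]^n,\quad\quad y\mapsto\bigl(\pi^{-1}(y),(t_{\pi^{-1}(y)}(i))_{i=1}^n\bigr).$$
This map is continuous by Lemma \ref{txn}, since $\pi^{-1}$ is continuous. It is well defined into $[0,n]^n$ because $0\le t_x(i)\le i\le n$.

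Next I take a compact metrizable $P$ with $\mathrm{dim}(P)=\mathrm{Widim}_\eta(X\times[0,n]^n,d^\phi_{[0,n]}\times_{l^\infty}\mathrm{dist}_{l^\infty})$, together with a continuous $\eta$-embedding $f:X\times[0,n]^n\to P$. I then show that $f\circ g$ is an $\epsilon$-embedding of $Y$ with respect to $\rho^\psi_{[0,mn]\cap(m\mathbb{Z})}$. Let $y,y'$ satisfy $f(g(y))=f(g(y'))$, and put $x=\pi^{-1}(y)$, $x'=\pi^{-1}(y')$. Then:
\begin{itemize}
\item $d^\phi_{[0,n]}(x,x')\le\eta$;
\item $|t_x(i)-t_{x'}(i)|\le\eta$ for $1\le i\le n$, and trivially also for $i=0$.
\end{itemize}
Fix $0\le i\le n$. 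By property (i) and the definition of $t_x(i)$,
$$\psi_{mi}(y)=\psi_{\theta(t_x(i),x)}(\pi(x))=\pi(\phi_{t_x(i)}(x)),$$
and similarly $\psi_{mi}(y')=\pi(\phi_{t_{x'}(i)}(x'))$. To compare these, insert the intermediate point $\pi(\phi_{t_x(i)}(x'))$ and use the triangle inequality:
\begin{itemize}
\item Since $t_x(i)\in[0,n]$, we have $d(\phi_{t_x(i)}(x),\phi_{t_x(i)}(x'))\le\eta<\delta$. The first choice of $\delta$ then gives $\rho(\pi(\phi_{t_x(i)}(x)),\pi(\phi_{t_x(i)}(x')))\le\epsilon/2$.
\item By property (i) again, $\pi(\phi_t(x'))=\psi_{\theta(t,x')}(y')$. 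Since $|t_x(i)-t_{x'}(i)|\le\eta$, the choice of $\eta$ (Lemma \ref{thetattx}) gives $|\theta(t_x(i),x')-\theta(t_{x'}(i),x')|\le\delta$. The second choice of $\delta$ then gives $\rho(\pi(\phi_{t_x(i)}(x')),\pi(\phi_{t_{x'}(i)}(x')))\le\epsilon/2$.
\end{itemize}
Summing, $\rho(\psi_{mi}(y),\psi_{mi}(y'))\le\epsilon$ for all $0\le i\le n$. This is exactly $\rho^\psi_{[0,mn]\cap(m\mathbb{Z})}(y,y')\le\epsilon$, so $\mathrm{Widim}_\epsilon(Y,\rho^\psi_{[0,mn]\cap(m\mathbb{Z})})\le\mathrm{dim}(P)$, which is the claim.

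The only delicate point is the second bullet. Continuity of $\theta$ alone is not uniform over all of $\mathbb{R}\times X$, so the time discrepancy must be converted into a $\psi$-time discrepancy through Lemma \ref{thetattx}, applied at the fixed base point $x'$. This is why $\eta$ was chosen via Lemma \ref{thetattx} and not merely $\eta<\delta$. One must also check that each $t_x(i)$ lies in $[0,n]$, so that the Bowen metric $d^\phi_{[0,n]}$ controls $\phi_{t_x(i)}$. Both facts are already recorded before the statement.
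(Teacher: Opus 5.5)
Your proposal is correct and follows the paper's proof essentially step for step. It uses the same map $g(y)=(\pi^{-1}(y),(t_{\pi^{-1}(y)}(i))_{i=1}^n)$ and the same intermediate point $\pi(\phi_{t_x(i)}(x^\prime))=\psi_{\theta(t_x(i),x^\prime)}(\pi(x^\prime))$. It also bounds each half of the triangle inequality by $\epsilon/2$ in the same way, via the two conditions on $\delta$ and the choice of $\eta$ from Lemma \ref{thetattx}.
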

\begin{proof}
Let $g:Y\to X\times[0,n]^n$ be a map defined by $$y\mapsto(x,(t_x(i))_{i=1}^n),$$ where $x\in X$ is to denote $\pi^{-1}(y)$ for $y\in Y$, and $x^\prime\in X$ is planned to denote $\pi^{-1}(y^\prime)$ for $y^\prime\in Y$ in the sequel. It is clear that the map $g$ is continuous. Take a compact metrizable space $P$ with $\mathrm{dim}(P)=\mathrm{Widim}_\eta(X\times[0,n]^n,d^\phi_{[0,n]}\times_{l^\infty}\mathrm{dist}_{l^\infty})$ and a continuous map $f:X\times[0,n]^n\to P$ such that if $x,x^\prime\in X$ and $(l_i)_{i=1}^n,(l^\prime_i)_{i=1}^n\in[0,n]^n$ satisfy $f((x,(l_i)_{i=1}^n))=f((x^\prime,(l^\prime_i)_{i=1}^n))$ then one has both $d^\phi_{[0,n]}(x,x^\prime)\le\eta$ and $\mathrm{dist}_{l^\infty}((l_i)_{i=1}^n,(l^\prime_i)_{i=1}^n)\le\eta$.

Now consider the continuous map $f\circ g:Y\to P$. Suppose that $y,y^\prime\in Y$ satisfy $f(g(y))=f(g(y^\prime))$. By the choice of $f$ one has $d^\phi_{[0,n]}(x,x^\prime)\le\eta<\delta$ and $|t_x(i)-t_{x^\prime}(i)|\le\eta$, for every integer $0\le i\le n$. Since $t_x(i)\in[0,n]$ for all $i\in[0,n]\cap\mathbb{Z}$, one has $$d(\phi_{t_x(i)}(x),\phi_{t_x(i)}(x^\prime))<\delta,\quad\quad\forall i\in[0,n]\cap\mathbb{Z}.$$ By the choice of $\delta$, $$\rho(\pi(\phi_{t_x(i)}(x)),\pi(\phi_{t_x(i)}(x^\prime)))\le\epsilon/2,\quad\quad\forall i\in[0,n]\cap\mathbb{Z}.$$ By the definition of $\theta$, $$\rho(\psi_{\theta(t_x(i),x)}(\pi(x)),\psi_{\theta(t_x(i),x^\prime)}(\pi(x^\prime)))\le\epsilon/2,\quad\quad\forall i\in[0,n]\cap\mathbb{Z}.$$ By the definition of $t_x(i)$, $$\rho(\psi_{mi}(\pi(x)),\psi_{\theta(t_x(i),x^\prime)}(\pi(x^\prime)))\le\epsilon/2,\quad\quad\forall i\in[0,n]\cap\mathbb{Z}.$$ Note that $mi$ is also represented as $mi=\theta(t_{x^\prime}(i),x^\prime)$, for $i\in[0,n]\cap\mathbb{Z}$. It follows from the choice of $\eta$ that $|\theta(t_x(i),x^\prime)-\theta(t_{x^\prime}(i),x^\prime)|\le\delta$ which implies that $$|\theta(t_x(i),x^\prime)-mi|\le\delta,\quad\quad\forall i\in[0,n]\cap\mathbb{Z}.$$ By the choice of $\delta$ we deduce that $$\rho(\psi_{\theta(t_x(i),x^\prime)}(\pi(x^\prime)),\psi_{mi}(\pi(x^\prime)))\le\epsilon/2,\quad\quad\forall i\in[0,n]\cap\mathbb{Z}.$$ Therefore, $$\rho(\psi_{mi}(\pi(x)),\psi_{mi}(\pi(x^\prime)))\le\epsilon,\quad\quad\forall i\in[0,n]\cap\mathbb{Z}$$ and hence $\rho^\psi_{[0,mn]\cap(m\mathbb{Z})}(y,y^\prime)\le\epsilon$. This implies that $$\mathrm{Widim}_\epsilon(Y,\rho^\psi_{[0,mn]\cap(m\mathbb{Z})})\le\mathrm{dim}(P).$$ Thus, the statement follows.
\end{proof}

By the above lemma, for any $n\in\mathbb{N}$$$\mathrm{Widim}_\epsilon(Y,\rho^\psi_{[0,mn]\cap(m\mathbb{Z})})\le\mathrm{Widim}_\eta(X,d^\phi_{[0,n]})+n.$$ Since $\eta\to0$ as $\epsilon\to0$, this implies that $$m\cdot\mathrm{mdim}(Y,(\psi_s)_{s\in\mathbb{R}})\le\mathrm{mdim}(X,(\phi_t)_{t\in\mathbb{R}})+1.$$ By reason of the same proof as in the estimate for $M$, one has $$m\cdot\mathrm{mdim}(Y,(\psi_s)_{s\in\mathbb{R}})\le\mathrm{mdim}(X,(\phi_t)_{t\in\mathbb{R}}).$$ Thus, we conclude.

\section{Weakly equivalent flows without fixed points:\\topological entropy relation}
The main purpose of this section is to re-establish Ohno's theorem for topological entropy of weakly equivalent flows without fixed points with a substantially different method which comes to be much more direct and elementary.

\begin{theorem}
If $(X,(\phi_t)_{t\in\mathbb{R}})$ and $(Y,(\psi_s)_{s\in\mathbb{R}})$ are weakly equivalent flows without fixed points, then $$m\cdot\mathrm{h}_\mathsf{top}(Y,(\psi_s)_{s\in\mathbb{R}})\le\mathrm{h}_\mathsf{top}(X,(\phi_t)_{t\in\mathbb{R}})\le M\cdot\mathrm{h}_\mathsf{top}(Y,(\psi_s)_{s\in\mathbb{R}}),$$ where $0<m\le M<+\infty$ are computable in the sense of Section 2; in particular, $\mathrm{h}_\mathsf{top}(X,(\phi_t)_{t\in\mathbb{R}})=C\cdot\mathrm{h}_\mathsf{top}(Y,(\psi_s)_{s\in\mathbb{R}})$, where $C$ is a finite and positive number.
\end{theorem}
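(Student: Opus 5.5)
The plan is to mirror the two halves of the mean dimension argument in Section 4, replacing $\epsilon$-width dimension by $\mathrm{part}_\epsilon$ and compact auxiliary cubes by finite grids. The point is that a $\mathrm{part}$-partition need not be continuous, so continuous time coordinates can be discretized.

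For the upper bound $\mathrm{h}_\mathsf{top}(X,\phi)\le M\cdot\mathrm{h}_\mathsf{top}(Y,\psi)$, fix $\epsilon$ and choose $\delta,\eta$ exactly as in the estimate for $M$ in Section 4. Cover $[0,Mn]$ by a grid of mesh $\eta$, giving a finite set $G_n$ with $\#G_n\le Mn/\eta+1$. Take an $\eta$-partition $f:Y\to B$ for $\rho^\psi_{[0,Mn]}$ with $\#B=\mathrm{part}_\eta(Y,\rho^\psi_{[0,Mn]})$. Define
\[
x\mapsto\bigl(f(\pi(x)),(\lfloor\theta(i,x)/\eta\rfloor)_{i=1}^n\bigr).
\]
Equal images give $\rho^\psi_{[0,Mn]}(\pi x,\pi x')\le\eta$ and $|\theta(i,x)-\theta(i,x')|\le\eta$. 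The chain of inequalities already written in Section 4 then yields $d(\phi_i x,\phi_i x')\le\epsilon$ for $0\le i\le n$.

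The difficulty is the counting. The naive bound $(Mn/\eta)^n$ is superexponential, so it is useless. I would use the monotonicity $0<\theta(1,x)<\dots<\theta(n,x)\le Mn$ together with the increment bounds $m\le\theta(i,x)-\theta(i-1,x)\le M$ from Lemma \ref{mM}. The vector of floors is therefore determined by a sequence of increments, each taking at most $M/\eta+2$ values. This still gives $(M/\eta+2)^n$, which is exponential in $n$ with rate $\log(M/\eta+2)$, and that is not negligible.

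This is the main obstacle, and I would handle it with the iterate trick from Section 4. Apply the estimate to the time-$N$ flows $\phi^N,\psi^N$. Their reparameterization is still governed by $\theta$, with constants $m_N=\min\theta(N,\cdot)\ge Nm$ and $M_N\le NM$. For the time-$N$ system we record $\theta(iN,x)$ for $i\le n$, so the grid data costs $(NM/\eta+2)^n$ while the entropy scale is $Nn$. Dividing by $Nn$ gives
\[
\frac{\log\mathrm{part}_\epsilon(X,d^\phi_{[0,Nn]})}{Nn}\le\frac{\log\mathrm{part}_\eta(Y,\rho^\psi_{[0,MNn]})}{Nn}+\frac{\log(NM/\eta+2)}{N}.
\]
Let $n\to\infty$ first, using the existence of the intermediate limit (the Fekete argument of Section 3). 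Then let $N\to\infty$ with $\eta$ fixed, and finally let $\epsilon\to0$. The order matters: $\eta$ depends only on $\epsilon$, not on $N$, so the error term $\log(NM/\eta)/N$ vanishes. Lemma \ref{ttx} is used to pass from the integer-time Bowen metric to $d^\phi_{[0,Nn]}$, at the cost of shrinking $\epsilon$.

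For the lower bound $m\cdot\mathrm{h}_\mathsf{top}(Y)\le\mathrm{h}_\mathsf{top}(X)$, I would proceed symmetrically with the estimate for $m$ in Section 4. The map is
\[
y\mapsto\bigl(f(\pi^{-1}y),(\lfloor t_x(i)/\eta\rfloor)_{i=1}^n\bigr),
\]
with $f$ an $\eta$-partition for $d^\phi_{[0,n]}$ and $t_x(i)$ as in Lemma \ref{txn}. The argument of Section 4, using Lemma \ref{thetattx}, shows this is an $\epsilon$-partition for $\rho^\psi$ on the times $m\mathbb{Z}\cap[0,mn]$. The increments $t_x(i)-t_x(i-1)$ lie in $(0,1]$, since each $\theta$ step has length $m\le\theta(1,\cdot)$, so the grid data costs at most $(1/\eta+2)^n$. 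The same time-$N$ iteration with $\eta$ fixed kills this term. Finally, pass from the discrete times $m\mathbb{Z}$ to the full interval $[0,mn]$ via Lemma \ref{ttx}, or equivalently via the time-$m$ map having entropy $m\cdot\mathrm{h}_\mathsf{top}(Y)$. This concludes both inequalities, and $C$ is then a number in $[m,M]$.
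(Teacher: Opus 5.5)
The gap is in the time-$N$ step, and as written it breaks both halves. If you record only $\lfloor\theta(iN,x)/\eta\rfloor$, $0\le i\le n$, the chain of inequalities from Section 4 gives $d(\phi_{iN}x,\phi_{iN}x')\le\epsilon$ only at the sample times $iN$. That bounds $\mathrm{part}_\epsilon(X,d^{\phi_N}_{[0,n]\cap\mathbb{Z}})$, not $\mathrm{part}_\epsilon(X,d^\phi_{[0,Nn]})$.

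Lemma \ref{ttx} absorbs only time shifts below a fixed $\delta$, so it cannot bridge gaps of length $N$. To reach the intermediate times you need $|\theta(t,x)-\theta(t,x')|\le\eta$ for all $t\in[0,Nn]$. Between two samples nothing in your data stops this difference from drifting. Controlling the drift over a block of length $N$ forces closeness at a scale that depends on $N$, which contradicts your key claim that $\eta$ depends only on $\epsilon$.

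The inequality you actually proved does not help on its own. At fixed $\epsilon$, the quantity $\frac1N\lim_n\frac1n\log\mathrm{part}_\epsilon(X,d^{\phi_N}_{[0,n-1]\cap\mathbb{Z}})$ may tend to $0$ as $N\to\infty$. Already for the full shift, the $\epsilon$-entropy of $\sigma^N$ with respect to $\mathsf{D}$ is at most $(2L_\epsilon+1)\log2$ for every $N$, although $\mathrm{h}_\mathsf{top}(\{0,1\}^\mathbb{Z},\sigma^N)=N\log2$; the same happens for its suspension flow. So sending $N\to\infty$ first gives nothing, while sending $\epsilon\to0$ first makes $\log(NM/\eta+2)$ blow up.

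This is the real difference from Section 4. There the error term is $n$, independent of $\epsilon$, so the iterate trick can be applied to the invariant itself. Here the error depends on $\epsilon$ through $\eta$. Your lower bound fails the same way: with $f$ an $\eta$-partition and samples $t_x(Ni)$, you control only the $\psi$-times in $mN\mathbb{Z}$. Closeness at the fixed scale $\eta$ does not bound the drift of $\theta$ across a block of length $N$.

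The missing idea is what the paper's estimate for $m$ supplies: let the scale on the \emph{dominating} side shrink with $N$, while the target scale $\epsilon$ and the grid mesh $\eta$ stay fixed.
\begin{itemize}
\item The paper partitions the factor $H=h(X)$, which carries $t\mapsto\theta(1,\phi_tx)$, at scale $\delta/N$ over $[0,Nn]$. This gives $|\theta(1,\phi_lx)-\theta(1,\phi_lx')|\le\delta/N$ for all integers $l\le Nn$.
\item By the cocycle identity, the drift away from each recorded value $\theta(Ni,x)$ over the next $N$ unit steps is then at most $\eta$. Hence $|\theta(t,x)-\theta(t,x')|\le3\eta$ for all $t\in[0,Nn]$, and the map is a genuine $\epsilon$-partition for $\rho^\psi_{[0,mNn]}$.
\item The finer scale costs nothing, since $\lim_n\frac1n\log\mathrm{part}_{\delta/N}(H,\cdot)\le\mathrm{h}_\mathsf{top}(H)\le\mathrm{h}_\mathsf{top}(X)$ for every $N$.
\item The estimate for $M$ is then obtained by symmetry rather than by a separate argument: apply the $m$-estimate to the inverse time change $\tau$ and the flow $\psi^M$, using $\tau(M,y)\ge1$.
\end{itemize}

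Your bounded-increment count $(NM/\eta+2)^n$ is a valid and simpler substitute for the paper's counting lemma. With the source partition taken at an $N$-dependent scale, your lower bound goes through. One way to choose that scale is uniform continuity of $\theta$ on $[0,N+1]\times X$, restarting at each sample. The upper bound then follows by the same symmetry.
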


More specifically, take compatible metrics $d$ and $\rho$ on $X$ and $Y$, respectively. Let $\pi:X\to Y$ be a homeomorphism giving the weak equivalence relation between $(X,(\phi_t)_{t\in\mathbb{R}})$ and $(Y,(\psi_s)_{s\in\mathbb{R}})$. Let $\theta:\mathbb{R}\times X\to\mathbb{R}$ be the map induced by $\pi:X\to Y$, as described specifically in Section 2. Put $m=\min_{x\in X}\theta(1,x)$ and $M=\max_{x\in X}\theta(1,x)$. We will prove that $(X,(\phi_t)_{t\in\mathbb{R}})$ and $(Y,(\psi_s)_{s\in\mathbb{R}})$ share the topological entropy relation: $$m\cdot\mathrm{h}_\mathsf{top}(Y,(\psi_s)_{s\in\mathbb{R}})\le\mathrm{h}_\mathsf{top}(X,(\phi_t)_{t\in\mathbb{R}})\le M\cdot\mathrm{h}_\mathsf{top}(Y,(\psi_s)_{s\in\mathbb{R}}).$$

\subsection{An elementary counting lemma}
Note that in the estimates for topological entropy, we do not need some topological structure, according to our definition, equipped on the target set, nor some continuity lemma for the map of partition. However, we have to deal with the growth rate, in a much more careful approach, in relation to the number of increasing sequences of integers. In order to put Ohno's theorem into our framework, we need prepare an elementary combinatorial counting lemma in this subsection, which is quite simple but is being essentially used later, where we will see exactly how the modified approach to the definition of topological entropy for both discrete and continuous (amenable) group actions gives rise to an extra convenience such that we are allowed to reproduce Ohno's result with a highly unified method compared to mean dimension.

\begin{lemma}\label{counting}
Let $L\in\mathbb{N}$. For any $n,N\in\mathbb{N}$ put $$A_{n,N}=\{(a_j)_{j=1}^n\in[0,LNn]^n\cap\mathbb{Z}^n:a_1\le\cdots\le a_n\}.$$ Then one has $$\lim_{N\to\infty}\left(\frac1N\cdot\lim_{n\to\infty}\frac{\log\#A_{n,N}}{n}\right)=0.$$
\end{lemma}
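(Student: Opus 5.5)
We count non-decreasing sequences exactly, by a stars-and-bars identity, and then estimate the resulting binomial coefficient. Put $K=LNn$. A non-decreasing sequence $0\le a_1\le\cdots\le a_n\le K$ of integers is the same as a multiset of size $n$ drawn from the $K+1$ values $\{0,1,\dots,K\}$. Hence
$$\#A_{n,N}=\binom{K+n}{n}=\binom{(LN+1)n}{n}.$$
Equivalently, the increments $b_1=a_1,\ b_j=a_j-a_{j-1}\ (2\le j\le n),\ b_{n+1}=K-a_n$ are nonnegative integers summing to $K$. Either description gives the same binomial coefficient.

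Next we compute the inner limit for each fixed $N$. By Stirling's formula, or by the elementary bounds
$$\frac{1}{(k+1)}\,e^{kH(p)}\le\binom{k}{pk}\le e^{kH(p)}, \qquad H(p)=-p\log p-(1-p)\log(1-p),$$
applied with $k=(LN+1)n$ and $p=1/(LN+1)$, we obtain
$$\lim_{n\to\infty}\frac{\log\#A_{n,N}}{n}=(LN+1)\,H\!\left(\tfrac1{LN+1}\right)=(LN+1)\log(LN+1)-LN\log(LN).$$
In particular the inner limit exists, which is needed for the statement to make sense. If one prefers to avoid the entropy bound, the cruder estimate $\binom{k}{n}\le (ek/n)^n$ already yields
$$\limsup_{n\to\infty}\frac{\log\#A_{n,N}}{n}\le 1+\log(LN+1).$$
Existence of the limit then follows from the exact asymptotics, or else from supermultiplicativity in $n$ via concatenation together with Fekete's lemma.

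Finally we divide by $N$ and let $N\to\infty$. Using either form of the bound, we get
$$0\le\frac1N\lim_{n\to\infty}\frac{\log\#A_{n,N}}{n}\le\frac{1+\log(LN+1)}{N}\longrightarrow0.$$
The exact expression can also be handled directly: it equals $\log(LN+1)+LN\log\bigl(1+\tfrac1{LN}\bigr)\le\log(LN+1)+1$, which gives the same bound.

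The only real obstacle is rigor about the existence of the inner limit; the growth estimate itself is routine. The exact binomial formula settles existence directly through standard Stirling asymptotics, and those asymptotics are the step I would carry out most carefully.
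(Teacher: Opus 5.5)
Your proof is correct and is essentially the paper's argument: a stars-and-bars count followed by Stirling-type asymptotics of a binomial coefficient, which gives the same inner limit $\log(LN+1)+LN\log(1+\frac{1}{LN})$. The differences are minor. Your slack variable $b_{n+1}=K-a_n$ gives $\#A_{n,N}={(LN+1)n\choose n}$ exactly, so you avoid the paper's slicing by the value of $a_n$ and its extra factor $LNn+1$. You also justify existence of the inner limit explicitly, even without Stirling via ${k\choose n}\le(ek/n)^n$ plus Fekete, whereas the paper leaves that point implicit.
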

\begin{proof}
Fix $L\in\mathbb{N}$. Take $n,N\in\mathbb{N}$ arbitrarily and fix them temporarily. For each $k\in[0,LNn]\cap\mathbb{Z}$ put $$A_{n,N}^k=\{(a_j)_{j=1}^n\in A_{n,N}:a_n=k\}.$$ Clearly, $\#(A_{n,N}^k)\le\#(A_{n,N}^{LNn})$ for every $k\in[0,LNn]\cap\mathbb{Z}$. This implies that $\#A_{n,N}$ is bounded from above by $(LNn+1)\cdot\#(A_{n,N}^{LNn})$, namely, it suffices to show that $$\lim_{N\to\infty}\lim_{n\to\infty}\frac{\log\#(A_{n,N}^{LNn})}{nN}=0.$$

To see this, an easy but important observation is that for any $n,N\in\mathbb{N}$, the number $\#(A_{n,N}^{LNn})$ is equal to the number of solutions of nonnegative integers $b_1,\dots,b_n$ to the equation $$b_1+\cdots+b_n=LNn,$$ which is equivalent to the number of solutions of positive integers $b_1,\dots,b_n$ to the equation $$b_1+\cdots+b_n=(LN+1)\cdot n.$$

Due to the above-mentioned fact, it follows that $$\#(A_{n,N}^{LNn})={(LN+1)\cdot n-1\choose n-1}=\frac{((LN+1)\cdot n-1)!}{(n-1)!(LNn)!}\le\frac{((LN+1)n)!}{n!(LNn)!}.$$ By the Stirling approximation formula,
\begin{align*}
\lim_{n\to\infty}\frac{\log\#(A_{n,N}^{LNn})}{n}
&=\lim_{n\to\infty}\left(\frac1n\cdot\log\left(\frac{\sqrt{2\pi(LN+1)n}\cdot(\frac{(LN+1)n}{e})^{(LN+1)n}}{\sqrt{2\pi n}\cdot(\frac{n}{e})^n\cdot\sqrt{2\pi LNn}\cdot(\frac{LNn}{e})^{LNn}}\right)\right)\\
&=\lim_{n\to\infty}\log\left(\frac{(\frac{(LN+1)n}{e})^{LN+1}}{\frac{n}{e}\cdot(\frac{LNn}{e})^{LN}}\right)\\
&=\log\frac{(LN+1)^{LN+1}}{(LN)^{LN}}\\
&=\log(LN+1)+LN\cdot\log\left(1+\frac{1}{LN}\right)
\end{align*}
and in consequence
$$\lim_{N\to\infty}\left(\frac1N\cdot\lim_{n\to\infty}\frac{\log\#(A_{n,N}^{LNn})}{n}\right)=\lim_{N\to\infty}\left(\frac{\log(LN+1)}{N}+L\cdot\log\left(1+\frac{1}{LN}\right)\right)=0.$$
This proves the counting lemma.
\end{proof}

\subsection{Estimate for $m$}
First we prove $m\cdot\mathrm{h}_\mathsf{top}(Y,(\psi_s)_{s\in\mathbb{R}})\le\mathrm{h}_\mathsf{top}(X,(\phi_t)_{t\in\mathbb{R}})$.

Take an arbitrary $\epsilon>0$ and fix it for the moment. Applying Lemma \ref{ttx} to the flow $(Y,(\psi_s)_{s\in\mathbb{R}})$, there exists some $0<\eta<\epsilon$ with $1/\eta\in\mathbb{Z}$ such that if $s,s^\prime\in\mathbb{R}$ satisfy $|s-s^\prime|\le3\eta$ then $\rho(\psi_s(y),\psi_{s^\prime}(y))\le\epsilon/2$ for all $y\in Y$. By the uniform continuity of $\theta:\mathbb{R}\times X\to\mathbb{R}$ on the compact subset $[0,1]\times X$ and by the uniform continuity of the homeomorphism $\pi:X\to Y$ one can find some $0<\delta<\eta$ such that if $x,x^\prime\in X$ are such that $d(x,x^\prime)\le\delta$ then they satisfy both $\rho(\pi(x),\pi(x^\prime))\le\epsilon/2$ and $\max_{r\in[0,1]}|\theta(r,x)-\theta(r,x^\prime)|\le\eta$.

For any $n\in\mathbb{N}$ take a finite subset $I_{\eta,n}$ of $[0,Mn]$ as $$I_{\eta,n}=\left\{j\eta:j\in\left[0,\frac{Mn}{\eta}\right]\cap\mathbb{Z}\right\}.$$ For any $n,N\in\mathbb{N}$ consider the set $$A_{\eta,n,N}=\{(a_i)_{i=1}^n\in I_{\eta,Nn}^n:a_1\le\cdots\le a_n\},$$ where $I_{\eta,Nn}^n$ is to denote the self-product (of the finite set $I_{\eta,Nn}$) of $n$-times.

Now we wish to induce a factor of $(X,(\phi_t)_{t\in\mathbb{R}})$ so as not only to dominate the topological entropy naturally but also to capture needed information sufficiently. To do this, we first regard $(\theta(1,\phi_t(x)))_{t\in\mathbb{R}}$, for each $x\in X$, as a (one-parameter and real-valued) bounded continuous function (in $t$), and further, note that under the $\mathbb{R}$-shift it becomes a factor of $(X,(\phi_t)_{t\in\mathbb{R}})$, which is denoted by $(S,(\sigma_t)_{t\in\mathbb{R}})$, more precisely, defined by: $$S=\{(\theta(1,\phi_t(x)))_{t\in\mathbb{R}}:x\in X\},$$$$\sigma_r:S\to S,\quad\;(z_t)_{t\in\mathbb{R}}\mapsto(z_{t+r})_{t\in\mathbb{R}},\quad\quad\;\forall\,r\in\mathbb{R}.$$ A distance $\mathsf{D}$ on $S$ compatible with its topology is given by: $$\mathsf{D}((\theta(1,\phi_t(x)))_{t\in\mathbb{R}},(\theta(1,\phi_t(x^\prime)))_{t\in\mathbb{R}})\quad\quad\quad$$$$\quad\quad\quad=\sum_{k=0}^{+\infty}\frac{\max_{t\in[-k,k]}|\theta(1,\phi_t(x))-\theta(1,\phi_t(x^\prime))|}{2^k}\;$$$$\quad\quad\quad\quad\quad\quad\left((\theta(1,\phi_t(x)))_{t\in\mathbb{R}},(\theta(1,\phi_t(x^\prime)))_{t\in\mathbb{R}}\in S\right).$$ Next we turn to considering the map $$h:X\to X\times S,\quad\quad\quad\quad\;x\mapsto(x,(\theta(1,\phi_t(x)))_{t\in\mathbb{R}})$$ and a flow $((\phi\times\sigma)_t)_{t\in\mathbb{R}}$ on $X\times S$ defined by $$(\phi\times\sigma)_t=\phi_t\times\sigma_t:X\times S\to X\times S,\quad\,(x,z)\mapsto(\phi_t(x),\sigma_t(z)),\quad\,\quad\,\forall\;t\in\mathbb{R}.$$ As usual, a compatible metric on the product space $X\times S$ is given by the $l^\infty$-distance $\mathrm{dist}_{l^\infty}=d\times_{l^\infty}\mathsf{D}$.

Clearly, the map $h:X\to X\times S$ is continuous and $\mathbb{R}$-equivariant, namely: $h\circ\phi_t=(\phi_t\times\sigma_t)\circ h$, for all $t\in\mathbb{R}$, and in consequence is a factor map from $(X,(\phi_t)_{t\in\mathbb{R}})$ to $(H,((\phi\times\sigma)_t)_{t\in\mathbb{R}})$, where $H$ is to denote the image $h(X)$ which forms a closed and $\phi\times\sigma$-invariant subset of $X\times S$. Therefore one has $$\mathrm{h}_\mathsf{top}(H,((\phi\times\sigma)_t)_{t\in\mathbb{R}})\le\mathrm{h}_\mathsf{top}(X,(\phi_t)_{t\in\mathbb{R}}).$$

Intuitively, in the following estimate for our purpose we are actually limiting our attention to the inequality: $$\log\mathrm{part}_\epsilon(Y,\rho^\psi_{[0,mn]})\le\log\mathrm{part}_{\frac{\eta}{N}}(X,d^\phi_{[0,n]})+\log\mathrm{part}_{\frac{\eta}{N}}(S,\mathsf{D}^\sigma_{[0,n]})+\log\#A_{\eta,\lfloor\frac{n}{N}\rfloor,N}.$$ The counting lemma is then employed to deal with the term $\log\#A_{\eta,\lfloor n/N\rfloor,N}$. However, to avoid an unnecessary factor $2\cdot\mathrm{h}_\mathsf{top}(X,(\phi_t)_{t\in\mathbb{R}})$ appearing, one has to replace the first and second terms in the right-hand side by the above-mentioned $\log\mathrm{part}_{\eta/N}(H,(d\times_{l^\infty}\mathsf{D})^{\phi\times\sigma}_{[0,n]})$ with a single map used in the proof of the lemma below. Essentially, this handling is for notational simplicity and explicit demonstration, and of course, one may also use the continuity of the map $\theta(1,\cdot):X\to X$ along with $(X,d^\phi_{[0,n]})$, instead of $(S,\mathsf{D}^\sigma_{[0,n]})$, to formulate the calculation.

Note that Lemma \ref{mM} is being used implicitly in the proof.

\begin{lemma}
$$\log\mathrm{part}_\epsilon(Y,\rho^\psi_{[0,mNn]})\le\log\mathrm{part}_{\delta/N}(H,(d\times_{l^\infty}\mathsf{D})^{\phi\times\sigma}_{[0,Nn]})+\log\#A_{\eta,n,N}.$$
\end{lemma}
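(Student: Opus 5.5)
The plan is to build an explicit $\epsilon$-partition of $Y$ with respect to $\rho^\psi_{[0,mNn]}$. It will combine a $\delta/N$-partition of $H$ with a coarse record of the time change sampled at the times $N,2N,\dots,Nn$. The counting term $\#A_{\eta,n,N}$ accounts for the record. Because we only need a map of \emph{sets}, no continuity of the record is required, and we may discretize freely.

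\textbf{Construction.} Fix $n,N\in\mathbb{N}$. Take $g:H\to B$ to be a $\delta/N$-partition for $H$ with respect to $(d\times_{l^\infty}\mathsf{D})^{\phi\times\sigma}_{[0,Nn]}$, with $\#B=\mathrm{part}_{\delta/N}(H,(d\times_{l^\infty}\mathsf{D})^{\phi\times\sigma}_{[0,Nn]})$. For $y\in Y$ put $x=\pi^{-1}(y)$ and
$$a_i(y)=\eta\Big\lfloor \frac{\theta(Ni,x)}{\eta}\Big\rfloor,\quad 1\le i\le n.$$
By Lemma \ref{mM} and the monotonicity of $\theta$, these satisfy $0\le a_1\le\cdots\le a_n\le MNn$. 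Hence $(a_i(y))_{i=1}^n\in A_{\eta,n,N}$. Define
$$F:Y\to B\times A_{\eta,n,N},\qquad F(y)=\bigl(g(h(x)),(a_i(y))_{i=1}^n\bigr).$$
It then suffices to show that $F$ is an $\epsilon$-partition for $Y$ with respect to $\rho^\psi_{[0,mNn]}$.

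\textbf{Verification.} Suppose $F(y)=F(y')$, with $x=\pi^{-1}(y)$ and $x'=\pi^{-1}(y')$. From the first coordinate we get $d(\phi_u x,\phi_u x')\le\delta/N\le\delta$ for all $u\in[0,Nn]$. Looking at the $k=0$ term of $\mathsf{D}$ at the shifted points, we also get $|\theta(1,\phi_u x)-\theta(1,\phi_u x')|\le\delta/N$ for all $u\in[0,Nn]$. From the second coordinate, $|\theta(Ni,x)-\theta(Ni,x')|<\eta$ for $0\le i\le n$; the case $i=0$ is trivial.

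Now take $t\in[Ni,N(i+1)]$ with $t\le Nn$, and write $t=Ni+j+r$ with $j\in\{0,\dots,N-1\}$ and $r\in[0,1]$. Property (iii) gives
$$\theta(t,x)=\theta(Ni,x)+\sum_{k=0}^{j-1}\theta(1,\phi_{Ni+k}x)+\theta(r,\phi_{Ni+j}x),$$
and similarly for $x'$. We compare the two expansions term by term:
\begin{itemize}
\item The initial terms differ by less than $\eta$.
\item Each of the at most $N$ unit terms differs by at most $\delta/N$, so together they differ by at most $\delta<\eta$.
\item The fractional terms differ by at most $\eta$. This uses the choice of $\delta$, since $d(\phi_{Ni+j}x,\phi_{Ni+j}x')\le\delta$.
\end{itemize}
Hence $|\theta(t,x)-\theta(t,x')|\le3\eta$ for all $t\in[0,Nn]$.

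Finally, let $s\in[0,mNn]$. Since $\theta(Nn,x)\ge mNn$ and $\theta(\cdot,x)$ is continuous and increasing, there is $t\in[0,Nn]$ with $\theta(t,x)=s$. Then
$$\rho(\psi_s y,\psi_s y')\le\rho(\pi\phi_t x,\pi\phi_t x')+\rho(\psi_{\theta(t,x')}y',\psi_{\theta(t,x)}y').$$
The first term is $\le\epsilon/2$ by the choice of $\delta$. The second is $\le\epsilon/2$ by the choice of $\eta$, using the $3\eta$ bound. So $\rho^\psi_{[0,mNn]}(y,y')\le\epsilon$. Taking logarithms of $\#(B\times A_{\eta,n,N})$ gives the stated inequality.

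The main obstacle is the accumulation of errors in $\theta$ across the $N$ unit steps inside each block. It is defeated exactly by working at scale $\delta/N$ in $H$, which is why the $S$-component carrying $\theta(1,\phi_t(\cdot))$ is included. Resetting the error every $N$ units via the recorded $a_i$ keeps the count of these records subexponential after dividing by $N$, as in Lemma \ref{counting}.
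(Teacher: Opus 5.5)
Your proof is correct and follows the paper's argument essentially step for step. You use the same map $y\mapsto\bigl(f(h(x)),(\eta\lfloor\theta(Ni,x)/\eta\rfloor)_{i=1}^n\bigr)$, the same use of the $S$-component at scale $\delta/N$ to control the $N$ unit increments of $\theta$ between recorded times, and the same $3\eta$ bound feeding into the choices of $\eta$ and $\delta$; your explicit intermediate-value step (finding $t\in[0,Nn]$ with $\theta(t,x)=s$ for each $s\in[0,mNn]$) just spells out the paper's closing appeal to ``the definition of $m$''.
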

\begin{proof}
For every integer $1\le i\le n$ and $x\in X$ put $$\Theta(Ni,x)=\eta\cdot\left\lfloor\frac{\theta(Ni,x)}{\eta}\right\rfloor.$$ Since $\theta(Ni,x)$ is increasing in $i$, one has $(\Theta(Ni,x))_{i=1}^n\in A_{\eta,n,N}$.

Take a finite set $P$ with $\#P=\mathrm{part}_{\delta/N}(H,(d\times_{l^\infty}\mathsf{D})^{\phi\times\sigma}_{[0,Nn]})$ and a map $f:H\to P$ such that if $x,x^\prime\in X$ satisfy $f((x,(\theta(1,\phi_t(x)))_{t\in\mathbb{R}}))=f((x^\prime,(\theta(1,\phi_t(x^\prime)))_{t\in\mathbb{R}}))$ then they satisfy both $d^\phi_{[0,Nn]}(x,x^\prime)\le\delta/N$ and $\mathsf{D}^\sigma_{[0,Nn]}((\theta(1,\phi_t(x)))_{t\in\mathbb{R}},(\theta(1,\phi_t(x^\prime)))_{t\in\mathbb{R}})\le\delta/N$.

Now let $g:Y\to P\times A_{\eta,n,N}$ be a map defined by $$y\mapsto(f(h(x)),(\Theta(Ni,x))_{i=1}^n),$$ where $x\in X$ is to denote $\pi^{-1}(y)$ for $y\in Y$, and $x^\prime\in X$ is planned to denote $\pi^{-1}(y^\prime)$ for $y^\prime\in Y$ in the sequel.

Suppose that $y,y^\prime\in Y$ satisfy $g(y)=g(y^\prime)$. By the definition of $\Theta(Ni,x)$ it is clear that $$|\theta(Ni,x)-\theta(Ni,x^\prime)|\le\eta,\quad\quad\quad\forall\,i\in[0,n]\cap\mathbb{Z}.$$ The choice of $f$ gives $$d^\phi_{[0,Nn]}(x,x^\prime)\le\frac{\delta}{N}\le\delta,$$$$\mathsf{D}^\sigma_{[0,Nn]}((\theta(1,\phi_t(x)))_{t\in\mathbb{R}},(\theta(1,\phi_t(x^\prime)))_{t\in\mathbb{R}})\le\frac{\delta}{N}\le\frac{\eta}{N}.$$ Note that $$\theta(i,x)=\sum_{l=0}^{i-1}\theta(1,\phi_l(x)),\quad\quad\quad\forall\;i\in\mathbb{N},$$ and in consequence for any $q\in[0,n-1]\cap\mathbb{Z}$ and $j\in[1,N]\cap\mathbb{Z}$ one has $$\theta(qN+j,x)=\theta(qN,x)+\sum_{l=qN}^{qN+j-1}\theta(1,\phi_l(x)),$$ which applies to $x^\prime$, too. It follows that $$|\theta(i,x)-\theta(i,x^\prime)|\le\eta+N\cdot\frac{\eta}{N}=2\eta,\quad\quad\quad\forall\;i\in[0,Nn]\cap\mathbb{Z}.$$ Moreover, note that $$\theta(t,x)=\theta(i,x)+\theta(r,\phi_i(x)),\quad$$ for any $t\in\mathbb{R}$ being represented as $t=i+r$, where $i=\lfloor t\rfloor$ and $0\le r<1$. By the choice of $\delta$, this implies that $$|\theta(t,x)-\theta(t,x^\prime)|\le2\eta+\eta=3\eta,\quad\quad\quad\forall\;t\in[0,Nn].$$ By the choice of $\eta$, $$\rho(\psi_{\theta(t,x)}(\pi(x^\prime)),\psi_{\theta(t,x^\prime)}(\pi(x^\prime)))\le\epsilon/2,\quad\quad\forall\;t\in[0,Nn],$$ and further, by the choice of $\delta$ again, $$\rho(\psi_{\theta(t,x)}(y),\psi_{\theta(t,x)}(y^\prime))\le\rho(\pi(\phi_t(x)),\pi(\phi_t(x^\prime)))+\rho(\psi_{\theta(t,x)}(\pi(x^\prime)),\psi_{\theta(t,x^\prime)}(\pi(x^\prime)))$$$$\;\quad\quad\quad\le\frac\epsilon2+\frac\epsilon2=\epsilon,\quad\quad\quad\quad\quad\quad\forall\;t\in[0,Nn].$$ By the definition of $m$$$\mathrm{part}_\epsilon(Y,\rho^\psi_{[0,mNn]})\le\#(P\times A_{\eta,n,N})=(\#P)\cdot(\#A_{\eta,n,N})$$ and hence the statement follows.
\end{proof}

Applying this lemma to every $n,N\in\mathbb{N}$ one deduces that for each $N\in\mathbb{N}$$$\lim_{n\to\infty}\frac{\log\mathrm{part}_\epsilon(Y,\rho^\psi_{[0,mn]})}{n}\le\lim_{n\to\infty}\frac{\log\mathrm{part}_{\delta/N}(H,(d\times_{l^\infty}\mathsf{D})^{\phi\times\sigma}_{[0,n]})}{n}+\frac1N\cdot\lim_{n\to\infty}\frac{\log\#A_{\eta,n,N}}{n}.$$ Note that all the limits involved in the above inequality exist (in fact, the first and second limits exist for $n$ ranging over $\mathbb{R}$), and hence one may replace a (sub)sequence (along which a limit was taken) contained in $\mathbb{N}$ (or $\mathbb{R}$) by any of the others (here for the first and second limits, more specifically, $\{Nn\}_{n=1}^{+\infty}$ has been replaced with $\{n\}_{n=1}^{+\infty}$). By Lemma \ref{counting} one concludes (by letting $N\to\infty$) that $$m\cdot\lim_{n\to\infty}\frac{\log\mathrm{part}_\epsilon(Y,\rho^\psi_{[0,n]})}{n}\le\mathrm{h}_\mathsf{top}(H,((\phi\times\sigma)_t)_{t\in\mathbb{R}}).$$ Since $\epsilon>0$ was taken arbitrarily, $$m\cdot\mathrm{h}_\mathsf{top}(Y,(\psi_s)_{s\in\mathbb{R}})\le\mathrm{h}_\mathsf{top}(H,((\phi\times\sigma)_t)_{t\in\mathbb{R}})\le\mathrm{h}_\mathsf{top}(X,(\phi_t)_{t\in\mathbb{R}}),$$ as desired.

\subsection{Estimate for $M$}
Here to show the inequality involving $M$ we adopt a brief but more comprehensive treatment for topological entropy, which still applies properly to the proof of the statement for mean dimension (but not vice versa). Consider the inverse (with respect to $\theta$) time-reparameterization $\tau$ (induced by $\pi^{-1}:Y\to X$) between $(Y,(\psi_s)_{s\in\mathbb{R}})$ and $(X,(\phi_t)_{t\in\mathbb{R}})$, i.e., a uniquely determined map $\tau:\mathbb{R}\times Y\to\mathbb{R}$ possessing all the properties (i), (ii), (iii), (iv) similar to those for $\theta:\mathbb{R}\times X\to\mathbb{R}$ as listed in Section 2. Note that an easily-deduced but important connection between $\tau$ and $\theta$ is the following equalities: $$s=\theta(\tau(s,\pi(x)),x),\quad t=\tau(\theta(t,x),\pi(x)),\quad\quad\forall\;s,t\in\mathbb{R},\quad\forall\;x\in X.$$ In particular, $$1=\tau(\theta(1,\pi^{-1}(y)),y)\le\tau(M,y),\quad\quad\quad\forall\;y\in Y.$$ Applying the estimate for $m$ to the above-mentioned lower bound $1$ for $\tau(M,\cdot)$ (in fact, one may see further $\min_{y\in Y}\tau(M,y)=1$) one concludes that $$\mathrm{h}_\mathsf{top}(X,(\phi_t)_{t\in\mathbb{R}})\le\mathrm{h}_\mathsf{top}(Y,(\psi_s^M)_{s\in\mathbb{R}})=M\cdot\mathrm{h}_\mathsf{top}(Y,(\psi_s)_{s\in\mathbb{R}}),$$ as desired.

\subsection*{Remark}
The idea contained in this section along with the corresponding ingredient of a modified approach to the definition of topological entropy (for all the actions of amenable groups) in Section 3 has some novelty, and (probably) leads to some potential applications elsewhere. The authors would hope that this alternative could grow to be a standard method (which, at least, is being hoped to provide another choice different from traditional ones) for addressing issues of (in particular, for estimating) topological entropy.

\section{Weakly equivalent flows with fixed points:\\a stronger example}
In this section, both topological entropy and mean dimension are being involved. The main theorem of this section is an explicitly-constructed example with a purely topological proof of its properties. Note that this result strengthens the one given by Ohno with a view towards mean dimension, because a flow of positive mean dimension can only have infinite topological entropy \cite{LW00}.

\begin{theorem}
There exists a pair of weakly equivalent flows such that one of them has infinite mean dimension and the other has zero topological entropy.
\end{theorem}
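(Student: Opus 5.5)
Start from a flow of infinite mean dimension and slow it down by a time change that vanishes on a closed set; the vanishing set becomes the fixed points and is what kills the entropy. The base system is the full shift on $([0,1]^{\mathbb{N}})^{\mathbb{Z}}$, or equivalently $(([0,1]^{\mathbb{N}})^{\mathbb{Z}},\sigma)$ with $\mathrm{mdim}=\infty$. Its suspension flow over a roof function $1$ is a flow $(X,(\phi_t)_{t\in\mathbb{R}})$ with $\mathrm{mdim}=\infty$, since the mean dimension of a suspension equals that of the base for roof $1$. Its time-$1$ map is then $\sigma$ up to the fibre coordinate; the fibre coordinate only adds a bounded-dimensional factor, which is negligible after dividing by $r$.

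\textbf{Step 1 (the second flow).} Pick a continuous function $u:X\to[0,1]$ with zero set $Z=u^{-1}(0)$ a closed invariant-free set, for instance the cross-section fibre over a fixed point of $\sigma$ together with a suitable closed set meeting every orbit. Define the new flow $(\psi_s)$ as the flow generated by the vector field $u\cdot V$, where $V$ is the generator of $\phi$. Concretely, $\psi_s(x)=\phi_{\alpha(s,x)}(x)$, where $\alpha$ solves $\partial_s\alpha=u(\phi_\alpha(x))$; this is well defined on the suspension without smooth structure, since it is only a reparametrisation along orbits. The identity map then sends $\phi$-orbits of points outside $Z$ onto $\psi$-orbits preserving orientation. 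Points of $Z$ become fixed points of $\psi$, and $\phi$-orbit pieces through $Z$ split into several $\psi$-orbits. To keep a genuine weak equivalence, I will instead choose $u$ to vanish on a set $Z$ that is itself a union of $\phi$-orbits; the natural choice is the suspension of fixed points such as the constant sequences. This is Ohno's device: the weak equivalence becomes a homeomorphism $X\to X$ matching orbits, at the cost of fixed points on both sides. So the first flow must also have those orbits collapsed to fixed points. I will therefore take both flows to be obtained from the suspension by multiplying the speed with $u$ and with $u^k$ (or $u\cdot w$ for a faster-decaying $w$) respectively. Both then have the same orbits and the same fixed-point set $Z$.

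\textbf{Step 2 (infinite mean dimension of the first flow).} With speed $u$ bounded below by $c>0$ on a large open set, one can find for each $K$ a subsystem of orbits staying in $\{u\ge c\}$. Coding these by the time-$1$ map, in the manner of the $M$-estimate in Section 4, gives $\mathrm{mdim}\ge c\cdot K$ for every $K$. To arrange this, the base should contain, for every $K$, a closed invariant piece isomorphic to $([0,1]^K)^{\mathbb{Z}}$ whose suspension avoids a neighborhood of $Z$. The speed reduction then costs at most the factor $c$, by the Section 4 argument applied to that invariant piece, which has no fixed points.

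\textbf{Step 3 (zero entropy of the second flow), the main obstacle.} Here the speed $w$ must be chosen so that every orbit spends almost all of its time near $Z$. I would choose $Z$ so that every point off $Z$ has orbit closure meeting $Z$ and, more importantly, so that $w$ is extremely small near $Z$. In the second flow the time to traverse one unit of $\phi$-time off a $\delta$-neighborhood of $Z$ is then bounded, while excursions near $Z$ take very long. Counting $\epsilon$-partitions over $[0,r]$ with the $\mathrm{part}_\epsilon$ formalism then proceeds as follows. Any $\psi$-orbit segment of length $r$ spends $\phi$-time $o(r)$ outside the $\delta$-neighborhood of $Z$. Inside that neighborhood the dynamics is $\epsilon$-close to fixed, so it is recorded only by entrance and exit times. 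These are coded as in the counting lemma, Lemma \ref{counting}, with subexponential cost. Outside it, the $o(r)$ units of $\phi$-time each cost $\mathrm{part}_\epsilon$ of one unit of $\phi$-time, which is finite. This gives $\log\mathrm{part}_\epsilon(X,d^\psi_{[0,r]})=o(r)$.

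The hard part is making "$o(r)$ time outside" uniform over all points. This forces a recurrence-to-$Z$ property uniformly, which conflicts with containing the $([0,1]^K)^{\mathbb{Z}}$ pieces of Step 2. I would resolve it by letting the two speeds differ: the first flow's speed $u$ is bounded below on the $K$-pieces, while the second flow's speed $w$ is small everywhere except on a thin set. Alternatively one can let $Z$ be large and use a sequence of pieces accumulating on $Z$.
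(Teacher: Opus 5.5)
There is a genuine gap. It is the conflict you flag yourself at the end of Step 3. Neither of your proposed fixes removes it, and with the full shift as base it cannot be removed at all.

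In your scheme both speeds are continuous, vanish exactly on the closed invariant set $Z$, and are positive off $Z$; otherwise the identity would not match orbits. Let $W$ be any closed invariant set of the unit-speed suspension with $W\cap Z=\emptyset$. By compactness the second speed satisfies $w\ge c>0$ on $W$. So on $W$ the second flow is fixed-point free and weakly equivalent, via the identity, to the unit-speed flow, with $\theta(1,\cdot)\ge c$. The theorem of Section 5 (Ohno's theorem) then bounds its entropy on $W$ from below by $c\cdot\mathrm{h}_{\mathsf{top}}$ of the unit-speed flow on $W$. Since a subsystem's entropy bounds the whole system's entropy from below, zero entropy of the second flow forces every closed invariant set disjoint from $Z$ to have zero entropy already for the unit-speed suspension.

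Your Step 2 explicitly requires pieces $([0,1]^K)^{\mathbb{Z}}$ avoiding a neighbourhood of $Z$, and these have infinite entropy. Making $w$ ``small except on a thin set'' does not help, because $w$ is still bounded below on each such compact piece. Enlarging $Z$ does not help either. If the base $Z_0$ of $Z$ is a proper closed $\sigma$-invariant subset of $([0,1]^{\mathbb{N}})^{\mathbb{Z}}$, there is a finite word $v$ such that every sequence containing $v$ lies outside $Z_0$; by invariance this holds at any position. Take two distinct words of a common length $\ell$, both beginning with $v$. The subshift of their bi-infinite free concatenations has entropy at least $\frac{\log2}{\ell}>0$ and misses $Z_0$. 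So the uniform claim ``$o(r)$ time outside a neighbourhood of $Z$'' in Step 3 is false for your base, and the counting argument has nothing to act on.

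The missing idea is a base that forces \emph{every} orbit back to the singularity, uniformly and with ever longer visits, so that all the mean dimension is carried by orbits accumulating on the fixed point. The paper's subshift $B=\overline{\sigma(E)}\subset([0,1]\cup\{-1\})^{\mathbb{Z}}$, built from $H_{n+1}=H_n\times\tilde{H_n}\times H_n$, does exactly this:
\begin{itemize}
\item every window $[j,j+4\cdot3^n]$ of every point of $B$ contains $2n-1$ consecutive symbols $-1$, so every nonempty closed invariant subset contains $\{-1\}^{\mathbb{Z}}$;
\item $H_n$ still has $p_n=(3^{n-1}+1)/2$ free letters $[0,1]$ among its $2\cdot3^{n-1}$ letters, giving $\mathrm{mdim}(B,\sigma)\ge\frac14$ by the Lebesgue lemma.
\end{itemize}
With the roof $\gamma_0=n\cdot4\cdot3^n$ on $Q_n\setminus Q_{n+1}$, an orbit segment of length $n\cdot4\cdot3^n$ comes down from height above $n\cdot4\cdot3^n$ at most once. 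This is what makes the explicit spanning set $\{\ast\}\cup G\cup V$ subexponential.

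Two further differences from your setup matter. First, the paper collapses the periodic orbit over $\{-1\}^{\mathbb{Z}}$ to a single point $\ast$ by one-point compactification, so ``near the singularity'' really means $\epsilon$-close to one point. With a circle or a larger set of fixed points, entrance and exit times alone do not determine the position. Second, infinite mean dimension does not come from a base of infinite mean dimension. It comes from disjoint unions of the sped-up copies $(X,(\phi_{Nt})_{t\in\mathbb{R}})$ and $(Y,(\psi_{Ns})_{s\in\mathbb{R}})$, $N\in\mathbb{N}$, shrinking to an added fixed point. Your time-change viewpoint is compatible with all of this, since changing the roof is a time change, but only once the full shift is replaced by such a base.
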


Generally speaking, our strategy is perturbation, and our technique refines the procedure suggested in Ohno's construction. Specifically, in Ohno's example the alphabet consists of two distinct points (i.e., its base used in the construction is a subshift over two symbols), while in our construction we will replace one of them with a closed interval bounded away from the other which we leave as is. Intuitively, Ohno's example comes essentially to be a factor of ours (indeed, one may induce a factor map by identifying all the points belonging to the compact interval with a single point different from the other point).

In what follows, we refine Ohno's construction, and in particular, we adopt an elementary method in the proof of the required properties, which proves to be much more straightforward than, and which differs substantially from, the measure-theoretic approach provided by Ohno, even if one's attention is only focused on the level of complexity characterized by topological entropy.

For notational simplicity, we turn to proving a seemingly weaker but actually equivalent statement:
\begin{itemize}\item
There exist two weakly equivalent flows $(X,(\phi_t)_{t\in\mathbb{R}})$ and $(Y,(\psi_s)_{s\in\mathbb{R}})$ such that $(X,(\phi_t)_{t\in\mathbb{R}})$ has zero topological entropy while $(Y,(\psi_s)_{s\in\mathbb{R}})$ has positive mean dimension.
\end{itemize}
In fact, it is very easy to see that this statement implies the above theorem. The reason is as follows: First of all, one picks a pair of weakly equivalent flows $(X,(\phi_t)_{t\in\mathbb{R}})$ and $(Y,(\psi_s)_{s\in\mathbb{R}})$ such that $(X,(\phi_t)_{t\in\mathbb{R}})$ has zero topological entropy and $(Y,(\psi_s)_{s\in\mathbb{R}})$ has positive mean dimension. Then, one can take a disjoint union of countably many flows $(X,(\phi_{Nt})_{t\in\mathbb{R}})$, for $N$ ranging over $\mathbb{N}$, with diameters tending to $0$ (as $N\to\infty$), which converge to (by adding) one fixed point. And similarly, one takes a disjoint union of countably many flows $(Y,(\psi_{Ns})_{s\in\mathbb{R}})$, for $N$ running over $\mathbb{N}$, with diameters tending to $0$ (as $N\to\infty$), such that they converge eventually to a fixed point added. Now one may see directly that these two (unions of) flows are still weakly equivalent, and meanwhile, the former has zero topological entropy while the latter has infinite mean dimension. Therefore any example constructed for the reduced statement will lead finally to a one valid for the theorem.

\subsection{Construction}
Roughly speaking, the construction will be fulfilled with two suspensions (but with an added twist) generated by two distinct roof functions, respectively, over a base (namely, given by a homeomorphism defined on a compact metrizable space) in common.

\subsubsection*{The base $(B,\sigma)$}
Put $I=[0,1]\cup\{-1\}$ and equip the set $I$ with the usual Euclidean distance, which will be used as the alphabet, where the compact interval $[0,1]$, along with a positive density of its appearance to the coordinate projections, is to guarantee the dimension requirement, and where the point $-1$ is planned to be used when generating a fixed point (actually being the unique singularity) near which one can make the growth rate of a roof function be lifted sufficiently high, so as to lower topological entropy for a flow.

Let $\mathcal{I}=I^\mathbb{Z}$ be the (countable) self-product of $I$, endowed with the product topology (which consequently becomes a compact metrizable space). Denote by $\sigma:\mathcal{I}\to\mathcal{I}$ the homeomorphism of $\mathcal{I}$ given by the shift action on the product space $I^\mathbb{Z}$. This is typically referred to as the full shift (of the group $\mathbb{Z}$) over the alphabet $I$. The base $(B,\sigma)$ will be chosen among subshifts of $(\mathcal{I},\sigma)$, i.e., $B$ will be constructed as a closed and $\sigma$-invariant subset of $\mathcal{I}$. More specifically, $B$ is defined to be $\overline{\sigma(E)}$ for some $E\subset\mathcal{I}$, the orbit closure (under the action of $\sigma$) of a subset $E$ of $\mathcal{I}$.

Next it suffices to define the set $E\subset\mathcal{I}$, which will be of the form $E=\prod_{i=-\infty}^{+\infty}F_i$ which thus is referred to as a (two-sided) \textit{string}, where $F_i\subset I$ for every $i\in\mathbb{Z}$. Let $F_0=\{-1\}$ and $F_{-i}=F_i$ for each $i\in\mathbb{N}$. It remains to describe $F_i$ for all $i\in\mathbb{N}$, which will be characterized by induction. Since the inductive procedure below has encompassed the whole range of positive integers (as coordinates), any $F_i\subset I$ (needed in the string $\prod_{i=1}^{+\infty}F_i$) in consequence is indeed uniquely determined, and moreover, is equal to either $\{-1\}$ or $[0,1]$ which are referred to as two (set-valued) \textit{letters} to constitute a \textit{word} (namely a finite substring of $E$). Further, it follows immediately from the constructive process that the string $E$ satisfies the following conditions:
\begin{itemize}
\item
for any $j\in\mathbb{Z}$ and any $n\in\mathbb{N}$ the word $\prod_{i=j}^{j+4\cdot3^n}F_i$ contains at least $(2n-1)$ consecutive $\{-1\}$ (i.e., $\{-1\}^{2n-1}$) as a subword;
\item
for any $n\in\mathbb{N}$ there exists a word of length $2\cdot3^{n-1}$, in which the number of $[0,1]$ is $p_n=(3^{n-1}+1)/2$.
\end{itemize}
Both of these two desired properties have nothing to do with the weak equivalence relation. Exactly, the former is for the flow $(X,(\phi_t)_{t\in\mathbb{R}})$ being of topological entropy zero, while the latter is to ensure that the flow $(Y,(\psi_s)_{s\in\mathbb{R}})$ has positive mean dimension.

Put $H_1=\{-1\}\times[0,1]$ and let $H_2=H_1\times\tilde{H_1}\times H_1$, where $\tilde{H_1}$ is defined to be $\{-1\}\times\{-1\}$. Inductively, if $H_n$ is supposed to be defined, for some $n\in\mathbb{N}$, and is assumed to contain at least $(2n-1)$ consecutive $\{-1\}$ as a subword, then $H_{n+1}$ is defined as $H_{n+1}=H_n\times\tilde{H_n}\times H_n$, where $\tilde{H_n}$ is defined by replacing some $[0,1]$ in $H_n$ with $\{-1\}$ such that $H_{n+1}$ contains at least $(2n+1)$ consecutive $\{-1\}$ as a subword (which may be chosen as a one located in the midst of a subword $\{-1\}$ and a subword of the maximum length consisting of consecutive $\{-1\}$, in the word $H_{n+1}$, and hence contributes two more $\{-1\}$ to $H_{n+1}$).

Note that for each $n\in\mathbb{N}$ the length of the word $H_n$ is equal to $2\cdot3^{n-1}$, and that the number of $[0,1]$ in $H_n$ is $p_n$. Clearly, the string $\prod_{i=1}^{+\infty}F_i$ is eventually (well) defined by (re-enumerating the limit of) $\{H_n\}_{n\in\mathbb{N}}$ (as $n\to\infty$). More precisely, $H_n$ defines the word $\prod_{i=1}^{2\cdot3^{n-1}}F_i$, for every $n\in\mathbb{N}$.

Compared to \cite{O80}, here we adopt almost the same constants, for the reader's convenience, as what Ohno used in the construction. But we need point out that in contrast to it, although for any $n\in\mathbb{N}$, $H_n$ appears infinitely many times as a word within the string $\prod_{i=1}^{+\infty}F_i$, neither this fact nor a further modification (mentioned in Ohno's construction \cite{O80}) to the above-defined string is needed.

\subsubsection*{The flows and weak equivalence}
Denote by $\{-1\}^\mathbb{Z}$ the (only) fixed point of $\sigma$ in $B$. Put $A_\star=B\setminus\{\{-1\}^\mathbb{Z}\}$ which becomes a locally compact metrizable space. Let $\gamma:A_\star\to(0,+\infty)$ be a continuous function (\textit{not} necessarily bounded). Consider the quotient space $A_\star^\gamma$ of $\{(u,x):0\le u\le\gamma(x),x\in A_\star\}$, where the equivalence relation $\sim$ is defined by $(\gamma(x),x)\sim(0,\sigma(x))$. Now a flow $(\xi^\gamma_r)_{r\in\mathbb{R}}$ defined on the locally compact metrizable space $A_\star^\gamma$ is naturally induced as follows: $\xi^\gamma_r(u,x)=(u+r,x)$, for $-u\le r<\gamma(x)-u$, where any representative $(u,x)$ should be understood as $(u,x)/\sim$ in $A_\star^\gamma$ provided the notation for the action of $\xi^\gamma_r$ extends to all $r\in\mathbb{R}$.

Note that this convention will still be valid for the formulation in the sequel (i.e., a point in the quotient space $A_\star^\gamma$ is written only as any of its representatives, when removing the symbol $/\sim$ of the equivalence relation from the notation for the points in the quotient space).

Let $A^\gamma=A_\star^\gamma\cup\{\ast\}$ be a one-point compactification of the locally compact metrizable space $A_\star^\gamma$. Clearly, the flow $(\xi^\gamma_r)_{r\in\mathbb{R}}$ extends (continuously) to the compact metrizable space $A^\gamma$ if $\ast$ is further required to be a fixed point of $(\xi^\gamma_r)_{r\in\mathbb{R}}$ in $A^\gamma$: $\xi^\gamma_r(\ast)=\ast$, for all $r\in\mathbb{R}$.

The weak equivalence relation between $(X,(\phi_t)_{t\in\mathbb{R}})$ and $(Y,(\psi_s)_{s\in\mathbb{R}})$ (being both constructed soon) is obvious, and is generally true for all the flows chosen as above. In fact, from the construction one may see that for any continuous functions $\gamma,\gamma^\prime:A_\star\to(0,+\infty)$ the flows $(\xi^\gamma_r)_{r\in\mathbb{R}}$ and $(\xi^{\gamma^\prime}_r)_{r\in\mathbb{R}}$, defined on $A^\gamma$ and $A^{\gamma^\prime}$ respectively, are weakly equivalent to each other, as a weak equivalence relation between $(A^\gamma,(\xi^\gamma_r)_{r\in\mathbb{R}})$ and $(A^{\gamma^\prime},(\xi^{\gamma^\prime}_r)_{r\in\mathbb{R}})$ is given by $\ast\mapsto\ast$ along with $(u,x)\mapsto(u\cdot\frac{\gamma^\prime(x)}{\gamma(x)},x)$, for $x\in A_\star$ and $0\le u<\gamma(x)$.

\subsection{The first flow $(Y,(\psi_s)_{s\in\mathbb{R}})$ and its mean dimension}
Now the first roof is simply taken to be the constant function $\gamma_1\equiv1$ on $A_\star$, and the flow $(Y,(\psi_s)_{s\in\mathbb{R}})$ is then defined by $(A^{\gamma_1},(\xi^{\gamma_1}_r)_{r\in\mathbb{R}})$.

Note that the base $(B,\sigma)$ can be considered as a subsystem of $(Y,\psi_1)$; more precisely, under the homeomorphism $\psi_1:Y\to Y$ (given by the time-$1$ map of the flow $(Y,(\psi_s)_{s\in\mathbb{R}})$), the set $B$ can be regarded as a closed and $\psi_1$-invariant subset of $Y$, and meanwhile, $\psi_1:B\to B$ (the restriction of $\psi_1$ to $B$) again gives $\sigma:B\to B$. Since $\mathrm{mdim}(Y,(\psi_s)_{s\in\mathbb{R}})=\mathrm{mdim}(Y,\psi_1)$ (for details see Subsection 3.5 in company with Subsection 3.4), it suffices to show that $\mathrm{mdim}(B,\sigma)$ is positive.

Since the string $E$ is of the type $\prod_{i=-\infty}^{+\infty}F_i$ for $F_i$ ($i\in\mathbb{Z}$) being either $\{-1\}$ or $[0,1]$ (which easily induces some \textit{continuous} distance-increasing map being helpful in the sense below), by the second property of $E$$$\mathrm{Widim}_\epsilon([0,1]^{p_n},\mathsf{dist}_{l^\infty})\le\mathrm{Widim}_\epsilon(B,\mathsf{D}^\sigma_{[0,2\cdot3^{n-1}-1]\cap\mathbb{Z}}),\quad\quad\forall\;n\in\mathbb{N},\quad\forall\;\epsilon>0,$$ where $\mathsf{dist}$ is the usual Euclidean distance on $I$ and where $\mathsf{D}$ is a typically-used distance on $\mathcal{I}$ as introduced before (which was applied to countable products there for simplicity, see Subsection 3.3). By the Lebesgue lemma (which is useful here for all $0<\epsilon<1$, see Subsection 3.5) one has $\mathrm{mdim}(B,\sigma)\ge\frac14$, as required.

\subsection{The second flow $(X,(\phi_t)_{t\in\mathbb{R}})$ and its topological entropy}
The second roof function $\gamma_0:A_\star\to(0,+\infty)$ is picked as follows: for every $n\in\mathbb{N}$, if $x\in Q_n\setminus Q_{n+1}$ then let $\gamma_0(x)=n\cdot4\cdot3^n$; while for all $x\in A_\star\setminus Q_1$ let $\gamma_0(x)=1$, where $Q_n$ is to denote the set $Q_n=\{q=(q_j)_{j\in\mathbb{Z}}\in B:\,q_i=-1,\;\forall i\in[-n+1,n-1]\cap\mathbb{Z}\}$, for each $n\in\mathbb{N}$. It is clear that the function $\gamma_0:A_\star\to(0,+\infty)$ is continuous. The flow $(X,(\phi_t)_{t\in\mathbb{R}})$ is now defined by $(A^{\gamma_0},(\xi^{\gamma_0}_r)_{r\in\mathbb{R}})$.

Next we need prove that $\mathrm{h}_\mathsf{top}(X,\phi_1)=0$. Unlike the proof given in Section 5, note that in relation to the topological entropy of this concrete flow, no matter which definition for topological entropy the proof below will use, there is no essential difference. So here we choose, on the contrary, to follow Bowen's definition for topological entropy, so as to show the reader in what kind of situations the key difference can be made.

First note that as $n\to\infty$, $(t_n,x_n)\to\ast$ in $X=A^{\gamma_0}$ if and only if $x_n\to\{-1\}^\mathbb{Z}$ in $B$.

Take an arbitrary $0<\epsilon<1$ (note that here ``$<1$'' is exactly to distinguish between $-1$ and a point in $[0,1]$) and fix it temporarily. Find some $N\in\mathbb{N}$ (sufficiently large) such that $\mathrm{dist}(\ast,(t,x))<\epsilon$ for all $t\ge N\cdot4\cdot3^N-1$ and $x\in A_\star$, where $\mathrm{dist}$ is a distance on $X$, which gives the topology of $X$ and which is bounded by (the restriction of) the $l^\infty$-distance (to $A^{\gamma_0}$). Find some $L\in\mathbb{N}$ (sufficiently large) such that if two points $q=(q_j)_{j\in\mathbb{Z}},q^\prime=(q^\prime_j)_{j\in\mathbb{Z}}\in B$ satisfy that $|q_i-q^\prime_i|\le\epsilon$ for all $i\in[-L,L]\cap\mathbb{Z}$, then they satisfy $\mathsf{D}(q,q^\prime)\le2\epsilon$.

For any integer $n>N$, a $(2\epsilon)$-spanning set for the space $X$ with respect to the distance $\mathrm{dist}^\phi_{[0,n\cdot4\cdot3^n-1]\cap\mathbb{Z}}$ can be explicitly constructed as the union of the following three subsets of $X$: $\{\ast\}$, $G\cap X$ and $V\cap X$, where $$G=\left(\bigcup_{j=0}^{\left\lfloor\frac1\epsilon\right\rfloor}\bigcup_{i=0}^{n\cdot4\cdot3^n}\left\{i+j\cdot\epsilon\right\}\right)\times\left(\bigcup_{i_{-4\cdot3^{n+1}-L-1},\dots,i_{4\cdot3^{n+1}+L+1}\in I^\epsilon_{-1}}\left\{q^{i_{-4\cdot3^{n+1}-L-1},\dots,i_{4\cdot3^{n+1}+L+1}}\right\}\right),$$ in which $I^\epsilon_{-1}\subset I$ is defined by $$I^\epsilon_{-1}=\left\{-1\right\}\cup\left\{j\cdot\epsilon:j\in\left[0,\left\lfloor\frac1\epsilon\right\rfloor\right]\cap\mathbb{Z}\right\}$$ and in which every point $q^{i_{-4\cdot3^{n+1}-L-1},\dots,i_{4\cdot3^{n+1}+L+1}}$ is defined as follows: if the set $$B\cap\left(\bigcap_{l=-4\cdot3^{n+1}-L-1}^{4\cdot3^{n+1}+L+1}\left\{q=(q_j)_{j\in\mathbb{Z}}\in\mathcal{I}:q_l=i_l\right\}\right)$$ is nonempty, then take a point $q^{i_{-4\cdot3^{n+1}-L-1},\dots,i_{4\cdot3^{n+1}+L+1}}$ arbitrarily from this set, otherwise take an arbitrary $q^{i_{-4\cdot3^{n+1}-L-1},\dots,i_{4\cdot3^{n+1}+L+1}}$ in the set $$\bigcap_{l=-4\cdot3^{n+1}-L-1}^{4\cdot3^{n+1}+L+1}\left\{q=(q_j)_{j\in\mathbb{Z}}\in\mathcal{I}:q_l=i_l\right\}$$ (provided the intersection $B\cap(\cap_{l=-4\cdot3^{n+1}-L-1}^{4\cdot3^{n+1}+L+1}\{q=(q_j)_{j\in\mathbb{Z}}\in\mathcal{I}:q_l=i_l\})$ is empty); and where $$V=\bigcup_{j=0}^{\left\lfloor\frac1\epsilon\right\rfloor}\bigcup_{i=0}^{n\cdot4\cdot3^n}\left\{h^{i,j}=(u^i_j,v):u^i_j=\left((n+1)\cdot4\cdot3^{n+1}-i\right)+j\cdot\epsilon,\;v=(v_l)_{l\in\mathbb{Z}}\in B\right\},$$ in which $v=(v_l)_{l\in\mathbb{Z}}$ is picked as follows: if $l\in[-n,n]\cap\mathbb{Z}$, then define $v_l=-1$; if $l\in\{-n-1,n+1\}$, then take some $v_l\in[0,1]$, while for all $l\in\mathbb{Z}\setminus[-n-1,n+1]$, take some $v_l\in I$, but meanwhile, the defined $v=(v_l)_{l\in\mathbb{Z}}$ should finally be such that $v=(v_l)_{l\in\mathbb{Z}}\in B$ (such a choice clearly exists).

Note that there are many ``ghosts of non-existing members'' being put to the expressions for $G$ and $V$, namely some elements which are in $G$ or $V$ as described above but which do not belong to $X$. As a result, we invited ``$\cap X$'' to be with both $G$ and $V$ (i.e., to the second and third participants in the union), so as to emphasize this fact which clearly does not have any influence on that formulation at all.

Note that $$\#G\le\left(\left\lfloor\frac1\epsilon\right\rfloor+1\right)\cdot\left(n\cdot4\cdot3^n+1\right)\cdot\left(\left\lfloor\frac1\epsilon\right\rfloor+2\right)^{2\cdot4\cdot3^{n+1}+2L+3},$$$$\#V\le\left(\left\lfloor\frac1\epsilon\right\rfloor+1\right)\cdot\left(n\cdot4\cdot3^n+1\right).$$ As $0<\epsilon<1$ was taken arbitrarily and has been fixed for the moment, a straightforward calculation by definition (and by counting the sum of the cardinalities of these participants in the union $\{\ast\}\cup G\cup V$, which bounds the sum of those in $\{\ast\}\cup(G\cap X)\cup(V\cap X)$) suffices to show that $\mathrm{h}_\mathsf{top}(X,\phi_1)=0$: $$\frac{\log\mathrm{span}_{2\epsilon}(X,\mathrm{dist}^\phi_{[0,n\cdot4\cdot3^n-1]\cap\mathbb{Z}})}{n\cdot4\cdot3^n}\le\frac{\log(1+\#G+\#V)}{n\cdot4\cdot3^n}\to0\quad\quad\quad(n\to+\infty),$$ as desired.

Thus now it remains to be seen that $\{\ast\}\cup(G\cap X)\cup(V\cap X)$ is indeed a $(2\epsilon)$-spanning set for $X$ with respect to the distance $\mathrm{dist}^\phi_{[0,n\cdot4\cdot3^n-1]\cap\mathbb{Z}}$. In fact, this follows directly from the definition of $B=\overline{\sigma(E)}$, the first property of $E=\prod_{i=-\infty}^{+\infty}F_i$, and the construction of the sets $G$ and $V$. But in what follows, we explain the reason in detail. This will complete the proof.

Let us take any point $(t,x)\in X$, which is now simply referred to as a \textit{traveller} (the description will benefit our explanation). When traveling during the (discrete) time interval $[0,n\cdot4\cdot3^n-1]\cap\mathbb{Z}$ (starting from some place in $X$), there are three types of activities for a traveller: walking, climbing and jumping. The base $B$ is considered as the \textit{ground}, while the only point $\ast$ of the first participant in the union is considered as the \textit{sun} which never goes down to the ground.

More precisely, since $\gamma_0(q)=1$ for all those $q=(q_j)_{j\in\mathbb{Z}}\in B$ with $q_0\ne-1$, any traveller, at any moment, either walks at a steady pace on the ground or on a level parallel to the ground (i.e., one step per second, to the next place at the same level), or climbs up (namely along the direction vertical to the ground), or jumps down (to the next place on the ground or some level, parallel to the ground, at a height of at most $1$). Note that here the \textit{second} (namely the time-$1$ map $\phi_1:X\to X$) is being used as the unit of time, i.e., if a traveller stays in a place $(\underline{t},\underline{x})\in X$, then after $1$ second, the one will move to the \textit{next} place $\phi_1((\underline{t},\underline{x}))$.

Further, by the first property of $E=\prod_{i=-\infty}^{+\infty}F_i$ and by the definition of $B=\overline{\sigma(E)}$, a traveller \textit{has} to climb up (for a sub-duration) during the time interval of traveling, and moreover, as indicated exactly in the length of the time interval, never jumps down twice from a very high place (beyond the top, near the sun), i.e., the traveller $(t,x)\in X$ has at most one opportunity of jumping down from a place at a height beyond the very \textit{top} $(n\cdot4\cdot3^n,x)\in X$. Here recall that the integer $n>N$ has been arbitrarily fixed for the moment, and also note that all the points in the space $X$ are usually used as positions (namely, being referred to as some \textit{places}), but still sometimes used as individuals (e.g., the traveller, and the ones from $(G\cup V)\cap X$).

Generally speaking, one coming from $G\cap X$ or $V\cap X$ is to be together with the traveller when taking part in any of the activities of the three kinds. To be precise, the set $G$ may be regarded as a group of \textit{travelling companions}, namely being responsible for moving (including walking, climbing, and jumping) \textit{normally} with the traveller (here the normal situation means that the traveller does not need to start from a place higher than the very top $(n\cdot4\cdot3^n,x)\in X$), among which the traveller should choose according to the starting place of traveling, while the set $V$ may be regarded as a group of \textit{experts} being exactly responsible for jumping down, at any possible moment (namely at the $l$-th second, for every $l$ in the time interval of traveling), in company with the traveller who starts from a place higher than $n\cdot4\cdot3^n$ ($\epsilon$-close to the sun), i.e., the traveller \textit{must} be accompanied by an expert from $V$ when jumping down, provided the one starts from a place at a height beyond the very top $(n\cdot4\cdot3^n,x)\in X$.

Clearly, there are now three cases listed as follows. In each of the cases a proper candidate is directly elected to be in company with the traveller (i.e., being $2\epsilon$-close to the traveller in $X$ during the whole time interval of traveling $[0,n\cdot4\cdot3^n-1]\cap\mathbb{Z}$), as precisely described above.
\begin{itemize}
\item
Case 1. The $\{\ast\}$ case: the sun!
\begin{itemize}
\item
Assume $(t,x)=\ast$. This is the trivial case (one may take $\ast$ itself as required).
\end{itemize}
\item
Case 2. The $G$ case: the traveller starts from a place at a height of at most $n\cdot4\cdot3^n$.
\begin{itemize}
\item
If $(t,x)\in[0,n\cdot4\cdot3^n]\times A_\star$, then the definition of $G$ simply gives a proper choice of travelling companions being with the traveller $(t,x)$ through the time interval of traveling, i.e., being $2\epsilon$-close to $(t,x)$ at any moment $l$ within $[0,n\cdot4\cdot3^n-1]\cap\mathbb{Z}$. Note that besides the definition of the set $G$, the definition of the continuous function $\gamma_0:A_\star\to(0,+\infty)$ along with the involved sets $Q_m$, for $m\in\mathbb{N}$, has been implicitly used, too.
\end{itemize}
\item
Case 3. The $V$ case: the traveller starts from a place higher than $n\cdot4\cdot3^n$.
\begin{itemize}
\item
Suppose that $(t,x)\in(n\cdot4\cdot3^n,+\infty)\times A_\star$. In relation to this case, there exist two sub-cases as listed below:
\begin{itemize}
\item[(i)]
If the traveller $(t,x)$ does not jump down for the duration of the traveling (namely if $\phi_l((t,x))\in(n\cdot4\cdot3^n,+\infty)\times A_\star$, for all $l\in[0,n\cdot4\cdot3^n-1]\cap\mathbb{Z}$), then one can still simply take the sun $\ast$ as desired.
\item[(ii)]
Otherwise, someone else outside $G$, namely a proper expert from $V$, has to be invited; and note that this is the exact reason why the set $V$ was introduced there. Now by the definition of $V$, the jumping moment (along with the initial position) for the traveller $(t,x)$ leads exactly to some expert $h^{i,j}$ from $V$ as desired, i.e., there exist some $i\in[0,n\cdot4\cdot3^n-1]\cap\mathbb{Z}$ (being referred to as the jumping moment) and $j\in[0,\lfloor1/\epsilon\rfloor]\cap\mathbb{Z}$ (being used to choose the nearest expert $h^{i,j}$, where $i\in[0,n\cdot4\cdot3^n-1]\cap\mathbb{Z}$ has been found, from $V$) such that $\phi_l((t,x))\in(n\cdot4\cdot3^n,+\infty)\times A_\star$ for every $l\in[0,i-1]\cap\mathbb{Z}$, while $\phi_l((t,x))\in[0,n\cdot4\cdot3^n]\times A_\star$, for every $l\in[i,n\cdot4\cdot3^n-1]\cap\mathbb{Z}$, and meanwhile, such that the expert $h^{i,j}\in V$ is $2\epsilon$-close to the traveller $(t,x)\in X$ at any moment $l$ through the whole traveling-time interval $[0,n\cdot4\cdot3^n-1]\cap\mathbb{Z}$ (which in consequence indicates a proper expert $h^{i,j}\in V$). Here one should note that before the traveller $(t,x)$ and the expert $h^{i,j}$ jump down, both of them are $\epsilon$-close to the sun; but after jumping down together to their next places, respectively, they are still $2\epsilon$-close to each other at any moment until the travelling ends (because of the definition of the expert $h^{i,j}\in V$, besides which, the definition of $\gamma_0:A_\star\to(0,+\infty)$ along with $Q_m$, for $m\in\mathbb{N}$, is also used implicitly here).
\end{itemize}
\end{itemize}
\end{itemize}

\noindent\textbf{Remark.}
As mentioned before, for topological entropy Sun--Young--Zhou \cite{SYZ09} found such an example (in relation to the reduced statement) in the class of smooth flows, and thus it seems reasonable if one further intends to require our above-constructed example to be smooth. But this immediately turns out to be not proper, as a differentiable flow on a compact manifold cannot have infinite topological entropy, and hence, has to be of mean dimension zero. Nonetheless, note that it is also possible for one to find such a pair of weakly equivalent flows, one of which has infinite mean dimension (and hence, infinite metric mean dimension) while the other has zero topological entropy dimension (and hence, zero topological entropy). The notion of \textit{topological entropy dimension} was introduced by Dou--Huang--Park in \cite{DHP11} around 2011, in company with a very nice and systematic investigation.

\medskip


\begin{thebibliography}{9999999}

\bibitem[DHP11]{DHP11}
Dou Dou, Wen Huang, Kyewon Koh Park.
Entropy dimension of topological dynamical systems.
Transactions of the American Mathematical Society 363 (2011), 659--680.

\bibitem[LW00]{LW00}
Elon Lindenstrauss, Benjamin Weiss.
Mean topological dimension.
Israel Journal of Mathematics 115 (2000), 1--24.

\bibitem[O80]{O80}
Taijiro Ohno.
A weak equivalence and topological entropy.
Publications of the Research Institute for Mathematical Sciences (Kyoto University) 16 (1980), 289--298.

\bibitem[SYZ09]{SYZ09}
Wenxiang Sun, Todd Young, Yunhua Zhou.
Topological entropies of equivalent smooth flows.
Transactions of the American Mathematical Society 361 (2009), 3071--3082.

\bibitem[W82]{W82}
Peter Walters.
An introduction to ergodic theory.
Springer-Verlag (1982).

\end{thebibliography}
\end{document}